\newtheorem{theorem}{Theorem} 
\newtheorem{proposition}[theorem]{Proposition}
\newtheorem{lemma}[theorem]{Lemma}
\newtheorem{remark}[theorem]{Remark}
\newtheorem{corollary}[theorem]{Corollary}
\newtheorem{definition}[theorem]{Definition}
\newtheorem{example}[theorem]{Example}
\newtheorem{question}{Question}
\newcommand{\ba}{\begin{align}}
\newcommand{\ea}{\end{align}}  
\newcommand{\be}{\begin{equation}}
\newcommand{\ee}{\end{equation}}
\newcommand{\bea}{\begin{eqnarray}}
\newcommand{\eea}{\end{eqnarray}}
\newcommand{\barr}{\begin{array}}
\newcommand{\earr}{\end{array}}
\newcommand{\bn}{\begin{enumerate}}
\newcommand{\en}{\end{enumerate}}
\newcommand{\bi}{\begin{itemize}}
\newcommand{\ei}{\end{itemize}}
\newcommand{\bbbm}{\begin{pmatrix}}
\newcommand{\eeem}{\end{pmatrix}}
\newcommand{\bbx}{{\bf x}}
\newcommand{\cH}{{\cal H}}
\newcommand{\cP}{{\cal P}}
\newcommand{\N}{{\mathbb N}}
\newcommand{\R}{{\mathbb R}}
\newcommand{\de}{\delta}
\newcommand{\ep}{\epsilon}
\newcommand{\la}{\lambda}
\newcommand{\vp}{\varphi}
\newcommand{\ignore}[1]{}{}
\newcommand{\noin}{\noindent}
\newcommand{\nn}{\nonumber}
\newcommand{\p}{{\partial}}
\newcommand{\q}{\quad}
\newcommand{\dom}{\mathop{\rm dom}}
 \newcommand{\Id}{\mathop{\rm Id}}
\newcommand{{\QED}}{{\hfill QED} \bigskip}
\renewcommand{\subset}{\subseteq}
\newcommand{\cal}{\mathcal}
\renewcommand{\xy}{\langle x,  y \rangle}
\newcommand{\yz}{\langle y,  z \rangle}
\newcommand{\zx}{\langle z,  x \rangle}
\newcommand{\xx}{\langle x_i,  x_j \rangle}
\DeclareMathOperator{\id}{Id}
\definecolor{darkspringgreen}{rgb}{0.09, 0.45, 0.27} 
\definecolor{darkgray}{rgb}{0.66, 0.66, 0.66}
\numberwithin{equation}{section}
\numberwithin{theorem}{section}
\tikzset{
    partial ellipse/.style args={#1:#2:#3}{
        insert path={+ (#1:#3) arc (#1:#2:#3)}
    }
}
\newcommand{\prox}{\ensuremath{\operatorname{Prox}}}
\begin{document}
\title[Maximality and involutivity of multi-conjugate convex functions]
{
Maximal monotonicity and cyclic involutivity of multi-conjugate convex functions
}

\thanks{
\em The author wishes to express gratitude to the Korea Institute of Advanced Study (KIAS) AI research group and the director Hyeon, Changbong for their hospitality and support during his stay at KIAS in 2022, where parts of this work were performed. TL would also like to thank Robert McCann for the fruitful discussions.
}

\date{\today}

\author{Tongseok Lim}
\address{Tongseok Lim: Krannert School of Management \newline Purdue University, West Lafayette, Indiana 47907, USA}
\email{lim336@purdue.edu}

\begin{abstract} 
A cornerstone in convex analysis is the crucial relationship between functions and their convex conjugate via the Fenchel-Young inequality. In this dual variable setting, the maximal monotonicity of the contact set $ \big\{(x,y) \ \big| \ f(x) + f^*(y) = \langle x,y \rangle  \big\}$ is due to the involution $f^{**} = f$ holding for convex lower-semicontinuous functions defined on any Hilbert space. 

We investigate the validity of the cyclic version of involution and maximal monotonicity for multiple (more than two) convex functions. As a result, we show that when the underlying space is the real line, cyclical involutivity and maximal monotonicity induced by multi-conjugate convex functions continue to hold as for the dual variable case. On the other hand, when the underlying space is multidimensional, we show that the corresponding properties do not hold in general unless a further regularity assumption is imposed.  We provide detailed examples that illustrate the significant differences between dual- and multi-conjugate convex functions, as well as between uni- and multi-dimensional underlying spaces.
\end{abstract}

\maketitle
\noindent\emph{Keywords: multivariate convex analysis, $c$-monotonicity, $c$-convex conjugacy, maximal monotonicity, cyclical involutivity, multi-marginal optimal transport.
}

\noindent\emph{MSC2020 Classification: Primary 47H05, 26B25; Secondary 49N15, 49K30, 52A01, 91B68. }

\section*{Definitions and Assumptions}
\noin{\boldmath$\cdot$} Unless specified otherwise, all functions have their values in $\R \cup \{+\infty\}$, i.e., functions do not assume the value $-\infty$ (only exception is in Lemma \ref{switch}). And all functions are assumed to be (or verified to be) proper, that is, $f \not\equiv +\infty$ but there exists $x \in \cH$ such that $f(x) \in \R$, unless it is stated as $f \equiv +\infty$.
\\
\noin{\boldmath$\cdot$} $\cH$ represents a real Hilbert space equipped with an inner product $\langle \ , \ \rangle$ on which all functions in the paper are defined. $|x| = \sqrt{\langle x , x \rangle}$ denotes the norm.
\\
\noin{\boldmath$\cdot$} $\langle x , y \rangle$ represents the dot product if $\cH= \R^n$, and is also denoted by $x \cdot y$.
\\
\noin{\boldmath$\cdot$} ${\cal A}(\cH)$ denotes the set of proper, lower-semicontinuous and convex functions on $\cH$.
\\
\noin{\boldmath$\cdot$} For $N \in \N$ and $\Gamma \subset \cH^N$, $S(\Gamma) := \big\{ \sum_{i=1}^N x_i \in \cH \ \big| \ (x_1,x_2,...,x_N) \in \Gamma \big\}$.
\\
\noin{\boldmath$\cdot$} For $N \in \N$, $\Delta = \Delta_{\cH^N} := \{(x,...,x) \in \cH^N \ | \ x \in \cH \}$ denotes the diagonal subspace of $\cH^N$.
\\
\noin{\boldmath$\cdot$} $\bbx = (x_1,...,x_N)$ denotes an arbitrary element in the product space $\cH^N$.
\\
\noin{\boldmath$\cdot$} $c=c_N : \cH^N \to \R$ denotes the ``cost function"; \ $c(\bbx)  := \sum_{1\leq i<j\leq N}\langle{x_i}, {x_j} \rangle$.
\\
\noin{\boldmath$\cdot$} For proper functions $\{f_i\}_{i=1,...,N}$ satisfying $
\sum_{i=1}^N f_i(x_i) \ge c(\bbx)$ for all $\bbx \in \cH^N$, we let $\Gamma = \Gamma_{\{f_i\}_{i=1}^N} := \big\{ \bbx\in \cH^N \ \big| \  \sum_{i=1}^N f_i(x_i) = c(\bbx) \big\}$.
\\
\noin{\boldmath$\cdot$} For proper functions $f_1,...,f_N$ on $\cH$, $\big(\bigoplus_{i=1}^N f_i\big)(\bbx) :=  \sum_{i=1}^N f_i(x_i)$.
\\
\noin{\boldmath$\cdot$}  $\square$ denotes the infimal convolution; \ $(f\square g)(x):=\inf_{y\in {\cal H}}\big(f(y)+g(x-y)\big)$.
\\
\noin{\boldmath$\cdot$} $q$ denotes the quadratic function; \ $q(x) := \frac12 |x|^2$.
\\
\noin{\boldmath$\cdot$} $e_f := f \square q$ denotes the Moreau envelope of $f$.
\\
\noin{\boldmath$\cdot$} For $f \in {\cal A}(\cH)$, $\prox_f$ denotes the proximal mapping (\cite[Definition 12.23]{BC2017}).
\\
\noin{\boldmath$\cdot$} $\Id : \cH \to \cH$ denotes the identity mapping; \ $\id(x)=x$.
\\
\noin{\boldmath$\cdot$} For $A \subset \cH$, int$A$ denotes the interior of $A$.
\\
\noin{\boldmath$\cdot$} For a function $f$, $\dom f := \{x \in \cH \ | \ f(x) \in \R\}$ denotes the domain of $f$.
\\
\noin{\boldmath$\cdot$} For $\la > 0$, a function $f$ is called $\la$-strongly convex if $f - \la q$ is convex.
\\
\noin{\boldmath$\cdot$} For $\Gamma \subset \cH^N$ and $i_0 \in \{1,...,N\}$, let $\Gamma_{i_0}: \cH \to 2^{\cH}$ denote the set-valued mapping; \  
$
\Gamma_{i_0} (x_{i_0}) :=\big\{ \sum_{i \neq i_0}x_i \ \big|\ (x_1,...,x_{i_0},...,x_N)\in \Gamma\ \big\}.
$
\\
\noin{\boldmath$\cdot$} (Essential smoothness \cite{Rockafellar1}) $f \in {\cal A}(\cH)$ is called essentially smooth if it satisfies the following three conditions for $C = {\rm int}(\dom f)$:

(a) $C$ is not empty;

(b) $f$ is differentiable thoughout $C$;

(c) $\lim_{i \to \infty} | \nabla f(x_i) | = +\infty$ whenever $x_1,x_2,...$ is a sequence in $C$ converging to a boundary point $x$ of $C$.
\\
\noin{\boldmath$\cdot$} ($c$-cyclical monotonicity \cite{BBPW}) 
Let $n,N \in \N \setminus \{1\}$.	The subset $\Gamma$ of $\cH^N$ is said to be $c$-cyclically monotone of order $n$, $n$-$c$-monotone for short, if for all $n$ tuples $(x^1_1,\dots,x_N^1),\dots,(x_1^n,\dots,x_N^n)$ in $\Gamma$ and every $N$ permutations $\sigma_1,\dots,\sigma_N$ in $S_n$, the following holds:
	\begin{equation}\label{cycmondef}
	\sum_{j=1}^nc(x_1^{\sigma_1(j)},\dots,x_N^{\sigma_N(j)})\leq \sum_{j=1}^n c(x_1^{j},\dots,x_N^{j}).
	\end{equation}
	$\Gamma$ is said to be $c$-cyclically monotone if it is $n$-$c$-monotone for every $n\in \N \setminus \{1\}$, and $\Gamma$ is said to be $c$-monotone if it is $2$-$c$-monotone. Finally, $\Gamma$ is said to be maximally $n$-$c$-monotone if it has no proper $n$-$c$-monotone extension.

\section{Introduction and our contribution}

Over the last few decades, the theory and applications of duality in convex analysis and monotone operator theory have advanced significantly (\cite{BC2017, Rock-Wets}), the study of dual variables $x, y \in \cH$, a function $f$, and its convex conjugate
\be
f^*(y) := \sup_{x \in \cH}\, \langle x, y \rangle - f(x)
\ee
which are linked by the Fenchel-Young inequality (see \cite[Proposition 13.15]{BC2017})
\be
f(x) + f^*(y) \ge \xy, \q x,y \in \cH.
\ee
We refer to  \cite{BNO2003, Bot2010, AAM2009, AAM2011, AASW2021, BS2008} for recent treatment of duality in optimization and convex analysis. Studies showed that if $f, g$ are convex conjugate to each other, i.e., if $f^* = g$ and $g^* = f$, then the following contact set 
\begin{align*}
\Gamma &= \{(x,y) \in \cH^2 \ | \ f(x) + g(y) = \xy  \} \\
&= \{(x,y) \in \cH^2 \ | \ y \in \p f (x) \} \\
&= \{(x,y) \in \cH^2 \ | \ x \in \p g(y) \}
\end{align*}
is {\em maximally monotone} (Rockafellar \cite{Rockafellar2}), which property turns out to be, by Minty's theorem (see \cite[Theorem~21.1]{BC2017}),
 equivalent to
 \be\label{maximality}
S(\Gamma) = \cH
\ee 
that is, $S(\Gamma)$ is not a proper subset of $\cH$. Note that if $\Gamma \subset \cH \times \cH$ is {\em monotone}, i.e.,
\be
\langle x-y, u-v \rangle \ge 0 \ \text{ for any } \ (x,u) \in \Gamma, (y,v) \in \Gamma, \nn
\ee
then $\Gamma \cap (\Delta^\perp + p)$ is either empty or a singleton for every $p \in \Delta$ (see \cite[Corollary 2.4]{BBPW}), where $\Delta$ is the diagonal subspace of $\cH \times \cH$. Thus \eqref{maximality} is a maximality assertion, saying there is no ``hole" in the monotone set $\Gamma$. 

These results established by Minty, Rockafellar, Fenchel, Moreau, and others have laid the groundwork for modern theory of nonlinear monotone operators \cite{C2018, RB1, RB2}.

Recently, S. Bartz, H.H. Bauschke, H.M. Phan, and X. Wang \cite{BBPW} provided a significant extension of the convex analysis theory into the multivariate, or multi-marginal (i.e., variables more than two), situation, stating that ``a comprehensive multi-marginal monotonicity and convex analysis theory is still missing." We denote $\bbx = (x_1,...,x_N) \in \cH^N$ and define $c_N : \cH^N \to \R$ by
\begin{equation}\label{cost}
	c(\bbx) = c_N(\bbx) := \sum_{1\leq i<j\leq N}\langle{x_i}, {x_j} \rangle.
\end{equation}
We may simply denote by $c$. An analogous statement of the bi-conjugacy --- $f^* = g$ and $g ^* = f$ --- in the multivariate setting can now be given as follows.

\begin{definition}[$c$-conjugate tuple \cite{BBPW}]
	For each $1\leq i\leq N$, let $f_i: \cH \to \R \cup \{+\infty\}$ be a proper function, i.e., not entirely $+\infty$. We say that $(f_1,\ldots,f_N)$ is a {\em $c$-conjugate tuple} if for each $1\leq i_0\leq N$ and $x_{i_0} \in \cH$, we have
	\begin{equation*}
	f_{i_0}(x_{i_0})=\Big(\bigoplus_{i\neq i_0}f_i\Big)^c(x_{i_0}) :=\sup_{i\neq i_0,\, x_i\in \cH}\ c(x_1,...,x_{i_0},...,x_N)-\sum_{i\neq i_0}f_i(x_i).	\end{equation*} 
\end{definition}

Now we focus on the following theorem, which is a culmination of the results in \cite{BBPW} (see Theorems 2.5 and 4.3 in \cite{BBPW}), significantly extending the theory of maximal monotone operators induced by several  convex functions.

\begin{theorem}[\cite{BBPW}]\label{t:split_max_c-mono}
For $1\leq i\leq N$, suppose $f_i \in {\cal A}(\cH)$ satisfy 
\[
\sum_{i=1}^N f_i(x_i) \ge c(\bbx) \ \text{ for all }\ \bbx \in \cH^N.
\]
Let $\Gamma = \Gamma_{\{f_i\}_{i=1}^N}$. The following assertions are equivalent:
	\begin{enumerate}
		\item \label{t:split_max_c-mono-i}
 There exist $1\leq i_0\leq N$ such that $\Gamma_{i_0}$ is maximally monotone; 
				
		\item\label{t:split_max_c-mono-ii} There exist $1\leq i_0\leq N$ such that $\Gamma_{i_0}=\partial f_{i_0}$;
		
		\item\label{t:split_max_c-mono-iii} $\Gamma_{i}=\partial f_{i}\ $ for each $1\leq i\leq N$;
		
		\item\label{t:split_max_c-mono-iv} $\prox_{f_1}+\cdots+\prox_{f_N}=\Id$;
		
		\item\label{t:split_max_c-mono-v} $e_{f_1^*}+\cdots+e_{f_N^*}=q$;		
		\item\label{t:split_max_c-mono-vi} $\Gamma+\Delta^\perp= \cH^N$;
		
                 \item\label{t:split_max_c-mono-vii} $S(\Gamma)=\cH$.

\end{enumerate}	
In this case, $(f_1,\ldots,f_N)$ is a $c$-conjugate tuple, and $\Gamma$ determines $(f_1,\ldots,f_N)$\\ uniquely up  to an additive constant tuple $(\rho_1,\ldots,\rho_N)$ such that $\sum_{i=1}^N \rho_i=0$.
\end{theorem}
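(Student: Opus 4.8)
The plan is to prove the seven conditions equivalent by a cycle of implications, with the conjugate/infimal-convolution calculus providing the connective tissue and the maximal monotonicity of subdifferentials (Rockafellar) supplying the ``hard'' inclusions. First I would isolate the one fact that holds with no extra hypothesis: $\Gamma_{i_0}\subseteq\partial f_{i_0}$ for every $i_0$. Given $\bbx\in\Gamma$ and $z\in\cH$, replacing the $i_0$-th coordinate of $\bbx$ by $z$ and subtracting the defining equality $\sum_i f_i(x_i)=c(\bbx)$ from the constraint inequality at the new point isolates $f_{i_0}(z)-f_{i_0}(x_{i_0})\ge\langle z-x_{i_0},\sum_{i\ne i_0}x_i\rangle$, since all cross terms not involving the $i_0$-th slot cancel. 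Hence $\sum_{i\ne i_0}x_i\in\partial f_{i_0}(x_{i_0})$, i.e. $\Gamma_{i_0}\subseteq\partial f_{i_0}$. Because $\partial f_{i_0}$ is monotone this makes each $\Gamma_{i_0}$ monotone (and $\Gamma$ itself $c$-monotone), and it gives (i)$\Leftrightarrow$(ii) almost for free: if $\Gamma_{i_0}$ is maximally monotone then, being contained in the monotone $\partial f_{i_0}$, it must equal it; conversely $\partial f_{i_0}$ is maximally monotone by Rockafellar's theorem, so (ii) returns (i). Implication (iii)$\Rightarrow$(ii) is trivial.

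Next I would pass to the normalized functions $g_i:=f_i+q$, so that the standing inequality becomes $\big(\bigoplus_i g_i\big)(\bbx)\ge q\big(\sum_i x_i\big)$, i.e. the multiple infimal convolution satisfies $g_1\square\cdots\square g_N\ge q$, and $\bbx\in\Gamma$ exactly when this convolution attains $q(\sum_i x_i)$ at $\bbx$; thus $S(\Gamma)=\cH$ (condition (vii)) is precisely the statement that $g_1\square\cdots\square g_N=q$ with the infimum attained everywhere. Conjugating and using $(g_1\square\cdots\square g_N)^*=\sum_i g_i^*$ together with $(f_i+q)^*=f_i^*\square q=e_{f_i^*}$ turns the convolution picture into $\sum_i e_{f_i^*}\le q$ (always) and identifies the equality case $\sum_i e_{f_i^*}=q$ with condition (v). Differentiating this identity and invoking Moreau's calculus ($\nabla e_{f_i^*}=\prox_{f_i}$) yields $\sum_i\prox_{f_i}=\Id$, which is (iv); the reverse passage from (iv) to (v) I would obtain by integrating the everywhere-defined $C^1$ potentials $e_{f_i^*}$, which recovers (v) only up to an additive constant. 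The geometric condition (vi) $\Gamma+\Delta^\perp=\cH^N$ I would read off the same fibration: the fibre of $S$ over $s$ is a translate of $\Delta^\perp$, so (vi) and (vii) are two phrasings of fullness once one knows, from $c$-monotonicity and the singleton-fibre property, that $\Gamma$ meets each such translate in at most one point.

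To close the loop and obtain (iii) I would connect fullness back to the subdifferentials through the proximal maps: since each $\prox_{f_{i_0}}$ is single-valued and defined on all of $\cH$, the equivalence $x=\prox_{f_{i_0}}(s)\Leftrightarrow s-x\in\partial f_{i_0}(x)$ combined with $\Gamma_{i_0}=\partial f_{i_0}$ forces every $s$ to lie in $S(\Gamma)$, while conversely fullness lets one recover $\Gamma_{i}=\partial f_{i}$ for every $i$ simultaneously, using the at-most-one-point property of the fibres to select a single $\bbx(s)\in\Gamma$ with $x_i(s)=\prox_{f_i}(s)$ for all $i$. The $c$-conjugacy conclusion and the uniqueness up to $(\rho_1,\dots,\rho_N)$ with $\sum_i\rho_i=0$ then follow by feeding $\Gamma_i=\partial f_i$ back into the definition of the $c$-transform.

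I expect the genuine obstacle to be the reconciliation of the ``potential-level'' conditions (iv)--(v) with the ``set-level'' conditions (vi)--(vii). Equality of gradients only pins $\sum_i e_{f_i^*}$ and $q$ together up to an additive constant, and showing that this constant vanishes --- equivalently, that the infimal convolution is not merely dominated by $q$ but equals it \emph{with attainment}, so that $\Gamma\ne\varnothing$ --- is exactly the step where the constraint $\sum_i f_i\ge c$ must be used in its tight form and where the normalization $\sum_i\rho_i=0$ enters; a naive ``differentiate and integrate'' argument is insufficient here, since adding a positive constant to some $f_i$ preserves all the $\prox_{f_i}$ yet empties $\Gamma$. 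I would therefore handle this crux by exhibiting attainment of the infimal convolution at a single well-chosen point and propagating it across all of $\cH$ via the monotone single-valuedness of the fibres, thereby forcing the constant to be zero and sealing the cycle.
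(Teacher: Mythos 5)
This theorem is imported from \cite{BBPW} (their Theorems 2.5 and 4.3) and the present paper gives no proof of it, so there is no internal argument to compare against; I can only assess your outline on its own terms. Most of your architecture is sound and is essentially the standard one: $\Gamma_{i_0}\subseteq\partial f_{i_0}$ unconditionally, whence (i)$\Leftrightarrow$(ii) via Rockafellar's maximality theorem; the substitution $g_i:=f_i+q$ converting the splitting inequality into $g_1\square\cdots\square g_N\ge q$ and, after conjugation, into $\sum_i e_{f_i^*}\le q$; the identification of the fibres of $S$ with translates of $\Delta^\perp$ for (vi)$\Leftrightarrow$(vii); and the prox/subdifferential dictionary linking (ii), (iii) and (vii). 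Two steps need more care than you give them: exactness of the infimal convolution (you must use the $1$-strong convexity of the $g_i$ to see that the infimum over the affine fibre $\{\bbx: \sum_i x_i=s\}$ is attained, since otherwise $\sum_i e_{f_i^*}=q$ does not yet yield $S(\Gamma)=\cH$), and the closing claims about $c$-conjugacy and uniqueness, which require a limiting argument for points $x_{i_0}\notin\dom\partial f_{i_0}$ and do not come ``for free'' from $\Gamma_i=\partial f_i$.

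The genuine gap is the one you flag yourself, and your proposed repair cannot work. Your own observation --- that replacing some $f_i$ by $f_i+\rho$ with $\rho>0$ preserves the standing inequality and every $\prox_{f_i}$, hence preserves (iv), while emptying $\Gamma$ and shifting $\sum_i e_{f_i^*}$ by $-\rho$ --- is not merely a warning that a naive argument fails: it is a counterexample to the implication (iv)$\Rightarrow$(v) under the hypotheses as stated (take $N=2$, $f_1=q+1$, $f_2=q$; then $\prox_{f_1}+\prox_{f_2}=\Id$ but $\Gamma=\emptyset$ and $e_{f_1^*}+e_{f_2^*}=q-1$). Consequently no ``well-chosen point of attainment'' exists to be propagated, and this branch of your cycle cannot be closed from the given data; (iv) is invariant under additive constants while (i)--(iii) and (v)--(vii) are not. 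What you can prove is the equivalence of (i)--(iii) and (v)--(vii) together with the one-way implication (v)$\Rightarrow$(iv); to recover the full seven-fold equivalence you must either impose a normalization (e.g. $\Gamma\neq\varnothing$, or a fixed choice of additive constants) or verify the precise hypotheses under which \cite{BBPW} state the result, rather than asserting that attainment can be forced.
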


Here \ref{t:split_max_c-mono-iv} represents the partition of the identity into a sum of firmly nonexpansive mappings, and \ref{t:split_max_c-mono-v} represents Moreau’s decomposition of the quadratic function into envelopes in the multivariate settings. In addition, \cite{BBPW} shows that $\Gamma$ is {\em maximally $c$-monotone} and, consequently, {\em maximally $c$-cyclically monotone} if any of the  assertions~\ref{t:split_max_c-mono-i}--\ref{t:split_max_c-mono-vii} hold. 

 It is natural to ask whether the cyclical conjugacy of $(f_1,...,f_N)$ conversely yields the assertions as well. As a result, \cite{BBPW} addresses the following question:
 \begin{question}\label{question1}
For any proper $c$-conjugate tuple  $\{f_i\}_{i=1}^N$, is $\Gamma = \Gamma_{\{f_i\}_{i=1}^N}$ maximal in the sense of \eqref{maximality} (so that the statements~\ref{t:split_max_c-mono-i}--\ref{t:split_max_c-mono-vii} hold)?
\end{question}
According to \cite{BBPW}, this question is still open to the best of their knowledge. The question appears to be quite interesting, particularly because it is well known to be affirmative when $N=2$ by the work of Rockafellar and Minty. 

In this regard, the following partial result is given in \cite[Theorem 4.6]{BBPW}.

\begin{theorem}[\cite{BBPW}]\label{3-marginal smooth conjugate}
	Let $N=3$, $n \in \N$ and $g, h \in {\cal A}( \R^n)$. Suppose that $f=(g\oplus h)^c$ and that $f$ is essentially smooth. Let $\Gamma = \Gamma_{\{f,g,h\}}$. Then assertions~\ref{t:split_max_c-mono-i}--\ref{t:split_max_c-mono-vii}  
	in Theorem~\ref{t:split_max_c-mono} hold (and therefore $\Gamma$ is maximally $c$-monotone).
\end{theorem}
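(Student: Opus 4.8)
The plan is to verify assertion \ref{t:split_max_c-mono-ii} of Theorem~\ref{t:split_max_c-mono} with $i_0=1$ and $f_1=f$; that is, I would show $\Gamma_1=\partial f$, after which all remaining assertions (and the maximal $c$-monotonicity of $\Gamma$) follow immediately from Theorem~\ref{t:split_max_c-mono}. First one checks the standing hypothesis is met: by the very definition of the $c$-conjugate, $f(x_1)\ge c(\bbx)-g(x_2)-h(x_3)$ for all $\bbx$, so $f\oplus g\oplus h\ge c$; and $f$, being a supremum of functions affine in $x_1$, lies in $\mathcal A(\R^n)$. One inclusion is then free: if $\bbx=(x_1,x_2,x_3)\in\Gamma$, then $(x_2,x_3)$ attains the supremum defining $f(x_1)$, so for every $x_1'$ we have $f(x_1')\ge c(x_1',x_2,x_3)-g(x_2)-h(x_3)$ while $f(x_1)=c(x_1,x_2,x_3)-g(x_2)-h(x_3)$; subtracting, and using that $c$ is affine in its first slot with slope $x_2+x_3$, gives $f(x_1')-f(x_1)\ge\langle x_1'-x_1,\,x_2+x_3\rangle$, i.e.\ $x_2+x_3\in\partial f(x_1)$. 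Hence $\Gamma_1\subseteq\partial f$ unconditionally.

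The technical heart is the reverse direction, for which I would first prove that the supremum defining $f(x_1)$ is \emph{attained} whenever $x_1\in C:=\mathrm{int}(\dom f)$. Fix such an $x_1$ and a closed ball $\bar B(x_1,\delta)\subseteq\dom f$ on which $f\le L<\infty$ (local boundedness of finite convex functions). Evaluating the displayed inequality above along $x_1'=x_1+\delta\, s/|s|$ with $s:=x_2+x_3$ yields the coercivity estimate $g(x_2)+h(x_3)-\langle x_2,x_3\rangle\ge\langle x_1,s\rangle+\delta|s|-L$, so any maximizing sequence $(x_2^k,x_3^k)$ has $|x_2^k+x_3^k|$ bounded. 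Writing $x_3^k=s^k-x_2^k$ then exposes the mixed term as $\langle x_2^k,x_3^k\rangle=\langle x_2^k,s^k\rangle-|x_2^k|^2$; since $g,h$ are minorized by affine functions and $s^k$ is bounded, the objective is dominated by $-|x_2^k|^2+O(|x_2^k|)$, forcing $x_2^k$ (hence $x_3^k$) to be bounded. A convergent subsequence together with the lower semicontinuity of $g,h$ produces an attaining pair $(x_2^*,x_3^*)$, i.e.\ $(x_1,x_2^*,x_3^*)\in\Gamma$, so $\Gamma_1(x_1)\neq\varnothing$. I expect this coercivity argument—recovering the hidden negative-definite term $-|x_2|^2$ from the cross term of $c$—to be the main obstacle, since it is exactly the mechanism that breaks down once $x_1$ leaves $\dom f$, and it is where finiteness and interiority are genuinely used.

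Finally I would upgrade attainment to the equality $\Gamma_1=\partial f$ using essential smoothness. For $x_1\in C$, $f$ is differentiable, so $\partial f(x_1)=\{\nabla f(x_1)\}$; combined with $\Gamma_1(x_1)\subseteq\partial f(x_1)$ and $\Gamma_1(x_1)\neq\varnothing$, this forces $\Gamma_1(x_1)=\{\nabla f(x_1)\}=\partial f(x_1)$. For $x_1\notin C$, essential smoothness gives $\partial f(x_1)=\varnothing$ (Rockafellar), whence $\Gamma_1(x_1)\subseteq\partial f(x_1)=\varnothing$ as well. Thus $\Gamma_1=\partial f$ as set-valued mappings, which is precisely assertion~\ref{t:split_max_c-mono-ii}, and Theorem~\ref{t:split_max_c-mono} then delivers \ref{t:split_max_c-mono-i}--\ref{t:split_max_c-mono-vii} and the maximal $c$-monotonicity of $\Gamma$. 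The role of essential smoothness here is exactly to collapse the possibly strict inclusion $\Gamma_1(x_1)\subsetneq\partial f(x_1)$ at points of non-differentiability, where $\Gamma_1(x_1)$ records only the exposed slopes $x_2+x_3$ of active maximizers while $\partial f(x_1)$ is their closed convex hull—so without it one cannot expect the equality, and indeed this is presumably where the multidimensional counterexamples in the sequel will enter.
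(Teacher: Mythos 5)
Your argument is correct, but it takes a genuinely different route from the one in the paper (which, for this cited result of Bartz--Bauschke--Phan--Wang, reproduces and then generalizes their strategy in the proof of Theorem \ref{N-marginal smooth conjugate}). The paper verifies assertion \ref{t:split_max_c-mono-v}, the Moreau decomposition $e_{f_1^*}+\cdots+e_{f_N^*}=q$: it computes $(f+q)=\big(q-\sum_{i\ge 2} e_{f_i^*}\big)^*$ via the conjugation identity of Lemma \ref{switch}, and then invokes Soloviov's theorem that a lower-semicontinuous function whose conjugate is essentially smooth must be convex, so that the double conjugate collapses and the decomposition follows. You instead verify assertion \ref{t:split_max_c-mono-ii} directly: the inclusion $\Gamma_1\subseteq\partial f$ is the standard envelope computation, and the substance is your coercivity argument showing the supremum defining $f(x_1)$ is attained for $x_1\in\mathrm{int}(\dom f)$ --- boundedness of $s^k=x_2^k+x_3^k$ from local boundedness of $f$ on a ball, then boundedness of $x_2^k$ from the hidden $-|x_2^k|^2$ in the cross term after substituting $x_3^k=s^k-x_2^k$, then lower semicontinuity --- followed by the observation that essential smoothness makes $\partial f(x_1)$ a singleton on the interior and empty off it, forcing $\Gamma_1=\partial f$. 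Both proofs are complete; yours is more elementary and self-contained (no appeal to Soloviov's nonconvex duality result, no Lemma \ref{switch}) and makes transparent exactly where essential smoothness is used (collapsing the a priori strict inclusion $\Gamma_1(x_1)\subseteq\partial f(x_1)$), whereas the paper's conjugacy computation extends to arbitrary $N$ with essentially no extra work and simultaneously delivers the quantitative identity \ref{t:split_max_c-mono-v}. One small point worth making explicit if you write this up: when $s^k=0$ the normalization $s^k/|s^k|$ is undefined, but then $|s^k|$ is trivially bounded, so the estimate is unaffected.
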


While Theorem \ref{3-marginal smooth conjugate} gives the first and affirmative answer to Question \ref{question1}, it appears to be restricted in the following ways: The value of $N$ must be $3$, and one of the $c$-conjugate convex functions must be smooth. In this regard, our first result addresses the problem of relaxing $N$ to be arbitrary as follows.

\begin{theorem}\label{N-marginal smooth conjugate}
	Let $n,N \in \N$, $N \ge 2$ and $f_1,...,f_N \in {\cal A}( \R^n)$. Suppose that $f_1=(\bigoplus_{i=2}^N f_i)^c$ and that $f_1$ is essentially smooth. Let $\Gamma = \Gamma_{\{ f_1,...,f_N\}}$. Then assertions~\ref{t:split_max_c-mono-i}--\ref{t:split_max_c-mono-vii}  
	in Theorem~\ref{t:split_max_c-mono} hold true.\end{theorem}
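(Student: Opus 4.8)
The plan is to establish assertion \ref{t:split_max_c-mono-ii} with $i_0=1$, namely $\Gamma_1=\p f_1$ for the essentially smooth marginal $f_1$; by the equivalences in Theorem~\ref{t:split_max_c-mono} this yields all of \ref{t:split_max_c-mono-i}--\ref{t:split_max_c-mono-vii}. One inclusion is free and uses only the inequality constraint: if $\bbx\in\Gamma$ and $y:=\sum_{j\ge2}x_j$, then for every $x_1'\in\R^n$ the relation $c(x_1',x_2,\dots,x_N)=c(\bbx)+\langle x_1'-x_1,\,y\rangle$ combined with $\sum_i f_i\ge c$ gives $f_1(x_1')\ge f_1(x_1)+\langle x_1'-x_1,\,y\rangle$, so $y\in\p f_1(x_1)$ and hence $\Gamma_1\subseteq\p f_1$ (no smoothness needed).

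For the reverse inclusion it suffices to prove that for each $x_1\in\Int(\dom f_1)$ the supremum defining $f_1(x_1)=\big(\bigoplus_{i\ge2}f_i\big)^c(x_1)$ is attained. Indeed, if $(x_2^*,\dots,x_N^*)$ is a maximizer then $(x_1,x_2^*,\dots,x_N^*)\in\Gamma$, so $\sum_{j\ge2}x_j^*\in\Gamma_1(x_1)\subseteq\p f_1(x_1)$; since $f_1$ is differentiable on $\Int(\dom f_1)$, its subdifferential there is the singleton $\{\nabla f_1(x_1)\}$, forcing $\sum_{j\ge2}x_j^*=\nabla f_1(x_1)$ and hence $\nabla f_1(x_1)\in\Gamma_1(x_1)$. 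Because $f_1$ is essentially smooth we have $\p f_1(x_1)=\{\nabla f_1(x_1)\}$ on $\Int(\dom f_1)$ and $\p f_1(x_1)=\es$ off it (Rockafellar), so attainment delivers $\p f_1\subseteq\Gamma_1$, and therefore equality.

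The crux is thus the attainment of this supremum, which is delicate precisely because for $N\ge4$ the inner quadratic form $c_{N-1}(x_2,\dots,x_N)$ is indefinite, so the objective is neither concave nor obviously coercive. The device I would use is the identity $c(\bbx)=q\big(\sum_i x_i\big)-\sum_i q(x_i)$, which recasts the objective (with $x_1$ fixed) as $\langle x_1,y\rangle+q(y)-\sum_{j\ge2}h_j(x_j)$, where $y=\sum_{j\ge2}x_j$ and $h_j:=f_j+q$. Each $f_j$, being proper convex lsc, has an affine minorant, so each $h_j$ is bounded below and coercive, with $h_j(x)\ge q(x)-|a_j|\,|x|-b_j$. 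Along a maximizing sequence I would first bound $y^k:=\sum_{j\ge2}x_j^k$: using $x_1\in\Int(\dom f_1)$, choose $\delta>0$ with $\overline{B}(x_1,\delta)\subseteq\dom f_1$ and $f_1\le M$ there; testing $f_1(x_1+\delta u)\ge(\text{objective at }k)+\delta\langle u,y^k\rangle$ over unit vectors $u$ yields $\delta|y^k|\le M-f_1(x_1)+o(1)$, so $(y^k)$ is bounded. Then $\langle x_1,y^k\rangle+q(y^k)$ is bounded, and since the objective converges to the finite value $f_1(x_1)$, the sum $\sum_{j\ge2}h_j(x_j^k)$ is bounded above; coercivity and lower-boundedness of each $h_j$ then force every $(x_j^k)$ to be bounded. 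Extracting a convergent subsequence and invoking upper semicontinuity of the objective (continuity of $c$, lower semicontinuity of the $f_j$) produces the desired maximizer.

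The main obstacle is exactly this attainment step for $N\ge4$, and the entire argument hinges on the observation that fixing the total sum $y$ renders the quadratic part $q(y)$ harmless while leaving the coercive penalties $-\sum_j h_j(x_j)$ in control, so the indefiniteness of $c_{N-1}$ never actually obstructs compactness. I would close by remarking that the hypothesis $\sum_i f_i\ge c$ is automatic from $f_1=\big(\bigoplus_{i\ge2}f_i\big)^c$, and that essential smoothness enters only to convert the membership $\sum_{j\ge2}x_j^*\in\p f_1(x_1)$ into a genuine identity and to eliminate boundary points of $\dom f_1$.
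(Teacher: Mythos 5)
Your argument is correct, but it takes a genuinely different route from the paper. The paper proves assertion \ref{t:split_max_c-mono-v}: it rewrites $(f_1+q)(x_1)$ via the conjugation identity of Lemma \ref{switch} as $\big(q-\sum_{i=2}^N e_{f_i^*}\big)^*(x_1)$ and then invokes Soloviov's nonconvex duality result (essential smoothness of $f^*$ forces convexity of $f$) to conclude $q-\sum_{i=2}^N e_{f_i^*}$ is convex and continuous, hence equal to its biconjugate, giving the Moreau decomposition $\sum_{i=1}^N e_{f_i^*}=q$. You instead prove assertion \ref{t:split_max_c-mono-ii} directly: the inclusion $\Gamma_1\subseteq\p f_1$ is immediate from $\sum_i f_i\ge c$, and the reverse inclusion reduces, via essential smoothness (which makes $\p f_1$ single-valued on $\Int(\dom f_1)$ and empty elsewhere), to attainment of the supremum defining $\big(\bigoplus_{i\ge 2}f_i\big)^c(x_1)$ at interior points. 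Your attainment argument is sound: the identity $c(\bbx)=q\big(\sum_i x_i\big)-\sum_i q(x_i)$ isolates the indefiniteness of $c$ into the single term $q(y)$ with $y=\sum_{j\ge2}x_j$, the local boundedness of $f_1$ near $x_1\in\Int(\dom f_1)$ bounds $y^k$ along a maximizing sequence, and the coercivity of each $f_j+q$ then bounds each coordinate, so upper semicontinuity delivers a maximizer and hence a point of $\Gamma$ over $x_1$. What each approach buys: the paper's proof is shorter and lands directly on the Moreau-decomposition identity \ref{t:split_max_c-mono-v}, but it leans on an external and somewhat less elementary ingredient (Soloviov); yours is self-contained, purely variational, and makes transparent exactly where essential smoothness is needed (to collapse $\p f_1(x_1)$ to a singleton so that existence of one contact point suffices). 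Both correctly reduce the theorem to a single assertion of Theorem \ref{t:split_max_c-mono} and both use the hypothesis $f_1=\big(\bigoplus_{i\ge2}f_i\big)^c$ to guarantee $\sum_i f_i\ge c$.
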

However, this result is not yet entirely satisfactory because the smoothness assumption must be imposed on one of the $c$-conjugate functions, which is not necessary when $N=2$. Can the theorem be proved in full generality, i.e., for any $c$-conjugate convex functions that only belong in ${\cal A}(\R^n)$?

Our next result shows that it is indeed affirmative if $n=1$., i.e., if $\cH = \R$.
\begin{theorem}\label{mainx}
	Let $N \ge 2$ and assume that $f_1,...,f_N \in {\cal A}( \R)$ are $c$-conjugate. Let $\Gamma = \Gamma_{\{f_1,...,f_N\}}$. Then assertions~\ref{t:split_max_c-mono-i}--\ref{t:split_max_c-mono-vii}  
	in Theorem~\ref{t:split_max_c-mono} hold true.
	\end{theorem}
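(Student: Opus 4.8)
The plan is to prove the single assertion~\ref{t:split_max_c-mono-v}, $\sum_{i=1}^N e_{f_i^*}=q$, and then invoke Theorem~\ref{t:split_max_c-mono}. Writing $\phi_i:=f_i+q$, one has $\phi_i^*=(f_i+q)^*=f_i^*\square q=e_{f_i^*}$, so with $\Phi:=\square_{i=1}^N\phi_i$ the target becomes $\Phi^*=\sum_i\phi_i^*=q$, i.e. $\Phi=q$. The standing hypothesis $\sum_i f_i(x_i)\ge c(\bbx)$ is, after completing the square, equivalent to $\sum_i\phi_i(x_i)\ge\frac12(\sum_i x_i)^2$, hence to $\Phi\ge q$; so one inequality is free and only $\Phi\le q$ remains. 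On $\R$ this has a clean operator form: from $\Phi^*=\sum_i\phi_i^*$ one gets $\partial\Phi=\big(\sum_i(\partial\phi_i)^{-1}\big)^{-1}=\big(\sum_i\prox_{f_i}\big)^{-1}$, so $\Phi=q$ is equivalent to assertion~\ref{t:split_max_c-mono-iv}, $\sum_i\prox_{f_i}=\Id$.

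Next I would extract the content of $c$-conjugacy. Fixing $i_0$ and setting $s=\sum_{i\ne i_0}x_i$, the splitting $c(\bbx)=x_{i_0}s+c\big((x_i)_{i\ne i_0}\big)$ together with $c\big((x_i)_{i\ne i_0}\big)-\sum_{i\ne i_0}f_i(x_i)=\frac12 s^2-\sum_{i\ne i_0}\phi_i(x_i)$ turns the defining supremum into $f_{i_0}(x)=\sup_s\big(xs-(\Phi_{i_0}(s)-\tfrac12 s^2)\big)$, where $\Phi_{i_0}:=\square_{i\ne i_0}\phi_i$. Thus $c$-conjugacy says exactly $f_{i_0}=G_{i_0}^*$ for every $i_0$, with $G_{i_0}:=\Phi_{i_0}-q$. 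Conjugating gives $f_{i_0}^*=\overline{\conv}\,G_{i_0}\le G_{i_0}$, from which (via Moreau's identity $e_g+e_{g^*}=q$) one checks $e_{f_{i_0}}\ge\sum_{i\ne i_0}e_{f_i^*}$, and equality for a single $i_0$ already forces assertion~\ref{t:split_max_c-mono-v}. Since $\Phi_{i_0}$ is a convex inf-convolution, that equality holds precisely when $G_{i_0}=\Phi_{i_0}-q$ is convex, i.e. when $\Phi_{i_0}$ is $1$-strongly convex; on $\R$ this is in turn equivalent to $\Phi_{i_0}'=\big(\sum_{i\ne i_0}\prox_{f_i}\big)^{-1}$ being expansive, i.e. to $\sum_{i\ne i_0}\prox_{f_i}$ being nonexpansive. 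So the entire theorem reduces to: \emph{for some $i_0$, $\sum_{i\ne i_0}\prox_{f_i}$ is $1$-Lipschitz}, equivalently the slope identity $\sum_{i=1}^N \prox_{f_i}'=1$ a.e.

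The mechanism behind this identity is transparent upon differentiating the contact equations: at $\bbx\in\Gamma$ where the $f_i$ are twice differentiable, $f_i'(x_i)=\sum_{j\ne i}x_j=t-x_i$ gives $(1+f_i''(x_i))\,dx_i=dt$, and summing the constraint $\sum_i dx_i=dt$ forces $\sum_i\frac{1}{1+f_i''(x_i)}=1$, which is exactly $\sum_i\prox_{f_i}'(t)=1$. When one $f_i$ is essentially smooth this is made rigorous immediately by Theorem~\ref{N-marginal smooth conjugate}, so that case is closed. For the general case I would approximate: perturb $(f_1,\dots,f_N)$ to $c$-conjugate tuples of smooth, strictly convex functions on $\R$, for which the relevant $G_{i_0}$ is automatically convex, apply Theorem~\ref{N-marginal smooth conjugate}, and pass $\sum_i\prox_{f_i}=\Id$ to the limit through the stability of proximal maps and Moreau envelopes, the additive normalization being pinned down by any single contact point (so $\Gamma\ne\varnothing$ must be recorded first).

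The main obstacle is precisely this non-smooth, global step: a priori every $G_{i_0}$ could fail to be convex, which is exactly what occurs in higher dimensions and manifests as a genuine ``hole'' $S(\Gamma)\subsetneq\cH$. Excluding this on $\R$—equivalently, showing that the monotone contact curve $t\mapsto(\prox_{f_i}(t))_i$ extends over all of $\R$ rather than terminating—is where the total order of the line is indispensable. The delicate part is that $c$-conjugacy is rigid, so one cannot smooth the $f_i$ independently (this destroys $c$-conjugacy); instead one must either construct $c$-conjugate smooth approximants adapted to the given tuple, or argue directly, via the ordering of the graphs of $\partial f_i$, that a simultaneous concave dent of all the $G_{i_0}$ is impossible. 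I expect this monotone-continuation/approximation argument to be the technical heart of the proof.
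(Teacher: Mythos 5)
Your reduction is correct and essentially coincides with the paper's own dual reformulation (Section \ref{sec5}): writing $\phi_i=f_i+q$, the $c$-conjugacy relation $f_{i_0}=G_{i_0}^*$ with $G_{i_0}=\big(\square_{i\neq i_0}\phi_i\big)-q$ is exactly the paper's statement that $f_{i_0}^*$ should be the $\Delta$-convex envelope of $\bigoplus_{i\ne i_0}\phi_i$ (Lemma \ref{deltalem}), and the equivalence of the conclusion with ``$G_{i_0}$ is convex,'' i.e.\ with $W=U+V$ lying in ${\cal A}_1^*(\cH)$, is \eqref{equiv1}--\eqref{equiv2}. The problem is that everything after that point in your proposal is a statement of intent rather than a proof. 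The entire content of the theorem is the claim that on $\R$ the function $G_{i_0}$ is automatically convex, and you offer two routes --- construct $c$-conjugate smooth approximants, or argue directly via the ordering of the graphs of $\partial f_i$ --- while explicitly conceding that you do not know how to carry out either one (``I expect this \dots to be the technical heart of the proof''). That heart is missing, so the proposal does not constitute a proof.

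Moreover, the approximation route as sketched is genuinely dangerous, not merely unfinished. As you yourself observe, mollifying the $f_i$ independently destroys $c$-conjugacy, and any scheme producing $c$-conjugate essentially smooth approximants with good limits would have to fail in $\R^2$, since Proposition \ref{counter1} exhibits a $c$-conjugate triple in ${\cal A}(\R^2)$ for which the conclusion is false while Theorem \ref{N-marginal smooth conjugate} forces it for all essentially smooth tuples; so one-dimensionality must enter the construction in an essential way that your sketch never identifies. The paper avoids approximation of the tuple altogether: it proves directly (Proposition \ref{envelope}) that for $H$-conjugate strongly convex $F,G$ on $\R$ the set $\mathcal{I}$ of sums $x+y$ with $\partial F(x)\cap\partial G(y)\neq\emptyset$ and differentiability at one of the two points is dense in $\R$ --- this is where the interval structure of subdifferentials on the line is used --- then establishes tightness $F(x_0)+G(y_0)=H(x_0+y_0)$ at such points by a contradiction argument exploiting strong convexity, handles possibly discontinuous $F$ by a truncation $G_R,H_R,F_R$ and a limit $R\to\infty$, and finally reduces general $N$ to $N=3$ by induction through the intermediate envelope $g=(f_1\oplus f_2)^c$. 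None of these steps, nor any substitute for them, appears in your proposal.
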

	
While working on these findings, the author realizes that Question \ref{question1} is in fact closely related to the following notion of {\em cyclical involutivity}, which, to the best of the author's knowledge, has not been introduced or investigated. As a result, we are led to provide the following definition.

\begin{definition}[Cyclical involutivity] Let $N \in \N$, $N \ge 2$, $c= c_N$. We shall say that a subset of functions $\Omega \subset {\cal A}(\cH)$ is $N$-cyclically involutive if for any $f_1,f_2,...,f_{N} \in \Omega
$ that satisfies 
\be\label{conjugate}
f_i(x_i) =\Big(\bigoplus_{j\neq i}f_j\Big)^c(x_{i}) \ \text{ for every } \  i=2,...,N,
\ee
then	 $(f_1,\ldots,f_N)$ is a $c$-conjugate tuple, that is, $f_1$ also satisfies
\be\label{wts}
f_1(x_1) =\Big(\bigoplus_{j\neq 1}f_j\Big)^c(x_{1}).
\ee
$\Omega$ is called cyclically involutive if it is $N$-cyclically involutive for every $N \ge 2$.
\end{definition}
Indeed, this definition is inspired by the following cornerstone statement by Fenchel and Moreau (see \cite[Theorem 13.37]{BC2017})
\be
f^{**} = f \ \text{ for every } f \in {\cal A}(\cH),
\ee
which can be rephrased as ``${\cal A}(\cH)$ is $2$-cyclically involutive" for any Hilbert space $\cH$. It is natural to wonder whether ${\cal A}(\cH)$ is $N$-cyclically involutive for each $N \ge 3$ as well. Our next result shows that it is also affirmative if $\cH = \R$.
\begin{theorem}\label{mainy}
${\cal A}(\R)$ is cyclically involutive.
\end{theorem}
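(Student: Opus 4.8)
The plan is to reduce the claim to assertion~\ref{t:split_max_c-mono-iv} of Theorem~\ref{t:split_max_c-mono}, namely $\prox_{f_1}+\cdots+\prox_{f_N}=\Id$, and then invoke that theorem. Since the case $N=2$ is exactly the Fenchel--Moreau identity $f^{**}=f$, I assume $N\ge 3$ and take $f_1,\dots,f_N\in{\cal A}(\R)$ satisfying \eqref{conjugate}. Feeding an arbitrary competitor into the supremum defining any one of these relations already forces the global inequality $\sum_{i=1}^N f_i(x_i)\ge c(\bbx)$, so the standing hypotheses of Theorem~\ref{t:split_max_c-mono} are in place. Once I establish $\sum_{i=1}^N\prox_{f_i}=\Id$, that theorem guarantees $(f_1,\dots,f_N)$ is a $c$-conjugate tuple; in particular \eqref{wts} holds, which is exactly what cyclical involutivity demands.

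Next I extract a pointwise identity for the prox maps. Writing $\phi_i:=f_i+q$ (which is $1$-strongly convex) and using $c(\bbx)=q(\sum_i x_i)-\sum_i q(x_i)$, each relation in \eqref{conjugate} becomes $\phi_i(x)=\sup_{s\in\R}\big(q(s)-G_i(s-x)\big)$, where $G_i:=\square_{j\neq i}\phi_j$. Since $G_i^{*}=\sum_{j\neq i}\phi_j^{*}=\sum_{j\neq i}e_{f_j^{*}}$ and $\nabla e_{f_j^{*}}=\prox_{f_j}$, the first-order and envelope conditions for this supremum read $x=\prox_{f_i}(s)$ together with $s-x=\sum_{j\neq i}\prox_{f_j}(s)$ at any parameter $s$ where the supremum is attained; adding them gives $\sum_{j=1}^N\prox_{f_j}(s)=s$. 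I stress that this computation is dimension-free, so the real line must enter only through the attainment question treated below.

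The \textbf{main obstacle} is to show that the set $A:=\bigcup_{i=2}^N\{\,s\in\R:\ \text{the above supremum is attained}\,\}$ is dense in $\R$. Conjugating the relation yields $f_i^{*}=(G_i-q)^{**}$, so $G_i-q$ (which is \emph{not} convex once $N\ge 3$, as the infimal convolution $G_i$ is only $\tfrac{1}{N-1}$-strongly convex) has biconjugate, i.e.\ lower convex envelope, equal to $f_i^{*}$; attainment at $x=\prox_{f_i}(s)$ is precisely membership of $s-x$ in the contact set where $G_i-q$ meets $f_i^{*}$. Here I will exploit the defining feature of $\R$: the lower convex envelope of a function on the line is affine across each maximal ``gap'' interval, and hence already realizes every intermediate slope at a contact endpoint of that gap. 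Combined with surjectivity of $\partial\phi_i$ (from $1$-strong convexity), this forces the attained parameters to fill a dense subset of $\R$. By contrast, on $\R^n$ with $n\ge 2$ the contact set of a convex envelope can have a gradient image that omits an open set, so attainment may fail on a set with nonempty interior; this is the mechanism behind the multidimensional counterexamples and explains why the one-dimensional hypothesis is essential.

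Finally, because every $\prox_{f_j}$ is firmly nonexpansive, hence continuous on $\R$, the identity $\sum_{j=1}^N\prox_{f_j}(s)=s$ proved on the dense set $A$ extends to all $s\in\R$, which is assertion~\ref{t:split_max_c-mono-iv}. Theorem~\ref{t:split_max_c-mono} then certifies that $(f_1,\dots,f_N)$ is a $c$-conjugate tuple, so \eqref{wts} holds; since $N\ge 2$ was arbitrary, ${\cal A}(\R)$ is cyclically involutive. Equivalently, one may phrase the crux as establishing the maximality $S(\Gamma)=\R$ of assertion~\ref{t:split_max_c-mono-vii} by the same one-dimensional completion argument, filling every ``hole'' in the monotone contact set by an intermediate-value/squeezing construction available only on the line.
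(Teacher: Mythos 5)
Your overall architecture is sound and is a legitimate alternative to the paper's: the reduction of \eqref{conjugate} to $\phi_i(x)=\sup_s\big(q(s)-G_i(s-x)\big)$ with $G_i=\square_{j\neq i}\phi_j$ is correct, the identity $\sum_j\prox_{f_j}(s)=s$ at any $s$ where equality $\phi_i(x)+G_i(s-x)=q(s)$ is achieved does follow from $\partial\phi_i^*=\prox_{f_i}$ and $\partial G_i^*=\sum_{j\neq i}\prox_{f_j}$, and the continuity extension plus the final appeal to Theorem \ref{t:split_max_c-mono}\ref{t:split_max_c-mono-iv} is valid. But the entire difficulty of the theorem is concentrated in the density of your attainment set $A$, and the argument you offer there does not work as stated. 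For a \emph{single} index $i$, the set $A_i$ is essentially $(\Id+\partial f_i^*)(C_i)$ where $C_i$ is the contact set of $G_i-q$ with its convex envelope $f_i^*$; on each maximal gap interval $(a,b)$ where $G_i-q$ lies strictly above $f_i^*$, the envelope is affine with some slope $m$, so $t+\partial f_i^*(t)=t+m$ sweeps out the open interval $(a+m,b+m)$, which is therefore a genuine \emph{hole} in $A_i$ of length $b-a$. The ``affine across gaps, realizes intermediate slopes at endpoints'' feature of the line thus does not make any single $A_i$ dense; what must be shown is that the holes of the various $A_i$ cannot overlap, and that is exactly where the mutual conjugacy has to be used quantitatively. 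Your sketch never engages with this.

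By contrast, the paper's proof (Theorem \ref{main}, Step 1, via Proposition \ref{envelope}) does the corresponding work explicitly: it proves density of the set $\cal I$ of sums $x+y$ with $\partial f(x)\cap\partial g(y)\neq\emptyset$ and one of $f,g$ differentiable there (a dichotomy on whether the endpoints of $\partial f(0)$ meet $\partial g((-r,r))$), and then proves the tightness $f(x_0)+g(y_0)=h(x_0+y_0)$ at such points by a contradiction argument that uses \emph{both} directions of the conjugacy together with strong convexity of $h$ (the sublevel set $K$ and the gradient lower bound \eqref{grad}). It also has to handle the fact that $F=f+q$, $G=g+q$ need not be finite everywhere, via the truncations $F_R,G_R,H_R$ and the limit $\min F_R\nearrow\min F$; your proposal silently assumes enough regularity to run first-order conditions. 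Finally, note the paper reduces general $N$ to $N=3$ by induction rather than treating all $N$ at once; your direct route is fine in principle, but without a proof that the holes of the $A_i$'s cannot align --- the analogue of Proposition \ref{envelope} --- the proposal has a genuine gap at its central step, and Proposition \ref{counter1} shows this step is precisely where one-dimensionality must be exploited in a nontrivial way.
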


See the proof of Theorem \ref{main}, in which we prove Theorems \ref{mainx} and \ref{mainy} simultaneously using an induction on $N$, demonstrating the close relationship between cyclical involutivity and maximal monotonicity.

Now for $\cH = \R^n$, we readily see that Theorems \ref{t:split_max_c-mono}, \ref{N-marginal smooth conjugate} combine to imply:
\begin{corollary}\label{main3}
The class of essentially smooth functions in ${\cal A}(\R^n)$ is cyclically involutive.
\end{corollary}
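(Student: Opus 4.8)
The plan is to verify directly that the class $\Omega$ of essentially smooth functions in ${\cal A}(\R^n)$ is $N$-cyclically involutive for every $N \ge 2$, by feeding a single one of the hypothesized conjugacy relations into Theorem~\ref{N-marginal smooth conjugate} and then reading off the full $c$-conjugate tuple conclusion from the closing clause of Theorem~\ref{t:split_max_c-mono}. Fix $N \ge 2$ and suppose $f_1,\dots,f_N \in \Omega$ satisfy \eqref{conjugate}, i.e.\ $f_i = \big(\bigoplus_{j\neq i} f_j\big)^c$ for every $i = 2,\dots,N$; I must produce \eqref{wts}, namely $f_1 = \big(\bigoplus_{j\neq 1} f_j\big)^c$. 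The key structural observation is that the cost $c(\bbx) = \sum_{i<j}\langle x_i, x_j\rangle$ is symmetric under any permutation of the coordinates, so Theorem~\ref{N-marginal smooth conjugate}, although stated with the distinguished essentially smooth function sitting in position $1$, applies verbatim with any index in that distinguished role after relabeling.

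With this in hand, I would select just one of the given relations, say the one for $i=2$, namely $f_2 = \big(\bigoplus_{j\neq 2}f_j\big)^c$. Since $f_2 \in \Omega$ is essentially smooth, the hypotheses of Theorem~\ref{N-marginal smooth conjugate} hold with $f_2$ occupying the distinguished slot; note that the conjugacy relation itself forces $\sum_{i=1}^N f_i(x_i) \ge c(\bbx)$ for all $\bbx$, so $\Gamma = \Gamma_{\{f_1,\dots,f_N\}}$ is well defined. Consequently assertions~\ref{t:split_max_c-mono-i}--\ref{t:split_max_c-mono-vii} of Theorem~\ref{t:split_max_c-mono} hold, and the final sentence of that theorem then guarantees that the entire tuple $(f_1,\dots,f_N)$ is a $c$-conjugate tuple. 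By the definition of a $c$-conjugate tuple this yields $f_i = \big(\bigoplus_{j\neq i}f_j\big)^c$ for \emph{every} $i$, in particular for $i=1$, which is precisely \eqref{wts}. Thus $\Omega$ is $N$-cyclically involutive for each $N \ge 2$, hence cyclically involutive.

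I do not anticipate a genuine obstacle, since the corollary is an assembly of the two cited theorems; the hint ``we readily see'' is accurate. The only points that require care are bookkeeping ones: confirming that the permutation symmetry of $c$ legitimately lets an arbitrary essentially smooth $f_{i_0}$ with $i_0 \in \{2,\dots,N\}$ take the distinguished role of Theorem~\ref{N-marginal smooth conjugate}, and observing that exactly one of the $N-1$ hypothesized relations in \eqref{conjugate} already suffices to trigger the conclusion (the remaining relations are then recovered automatically, rather than needed as input). One may also note that the case $N=2$ reduces to the classical Fenchel--Moreau involution $f^{**}=f$ and needs no smoothness at all, so the smoothness hypothesis is genuinely used only for $N \ge 3$, where Theorem~\ref{N-marginal smooth conjugate} supplies the missing maximality.
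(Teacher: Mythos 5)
Your argument is correct and is exactly the assembly the paper intends by ``we readily see that Theorems \ref{t:split_max_c-mono} and \ref{N-marginal smooth conjugate} combine to imply'' the corollary: use the permutation symmetry of $c$ to put one essentially smooth $f_{i_0}$ (with $i_0\neq 1$) in the distinguished slot of Theorem \ref{N-marginal smooth conjugate}, obtain assertions \ref{t:split_max_c-mono-i}--\ref{t:split_max_c-mono-vii}, and read off the full $c$-conjugacy of the tuple from the closing clause of Theorem \ref{t:split_max_c-mono}. Your bookkeeping points (the inequality $\sum_i f_i(x_i)\ge c(\bbx)$ following from a single conjugacy relation, and only one hypothesized relation being needed) are accurate.
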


Is ${\cal A}(\R^n)$ itself cyclically involutive as well? Somewhat surprisingly, it turns out not to be true in general, as the following counterexample indicates.

\begin{proposition}\label{noinvolutive}
There exists $f,g,h \in {\cal A}(\R^2)$ such that $f = (g \oplus h)^c$, $g = (h \oplus f)^c$, but $h \neq (f \oplus g)^c$.
\end{proposition}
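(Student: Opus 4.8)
The plan is to turn the three $c$-conjugacy relations into statements about ordinary convex conjugation and convexification, and then to exploit the extra freedom of $\R^2$ to write down an explicit triple. The starting point is the algebraic identity $c(\bbx) = q(x_1+x_2+x_3) - \sum_i q(x_i)$, which absorbs the cost into the quadratic $q$. Setting $\phi = f+q$, $\psi = g+q$, $\chi = h+q$ and interchanging suprema (using $q^*=q$), a direct computation gives
\[
f = (g \oplus h)^c \iff (f+q)^* = \bb{\conv}\big(q - (g+q)^* - (h+q)^*\big),
\]
and likewise for the other two relations. Writing $A = (f+q)^*$, $B=(g+q)^*$, $C=(h+q)^*$, the task becomes to find convex $A,B,C$ on $\R^2$ with
\[
A = \bb{\conv}(q-B-C), \quad B = \bb{\conv}(q-C-A), \quad C \neq \bb{\conv}(q-A-B).
\]
The decisive constraint — and the reason the crudest attempts fail — is admissibility: $f,g,h \in {\cal A}(\R^2)$ forces each of $A,B,C$ to be finite with $1$-Lipschitz gradient (equivalently, each conjugate is $1$-strongly convex), so one cannot simply take $C$ to be $q-A-B$ minus an arbitrary bump.

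I would then analyze the positive relations. With $D := q-A-B-C \ge 0$, each relation reads $A = \bb{\conv}(A+D)$, and a Carathéodory argument shows this holds exactly when, at every point where $D>0$, the function $A$ is affine along a segment through that point with endpoints in $\{D=0\}$ — that is, $A$ is ``ruled from the contact set''. This is precisely where dimension enters: in $\R^2$ one can arrange $A$ to be ruled along one direction and $B$ along a transversal direction at once, while on $\R$ there is a single direction and the three relations collapse into a consistent system (which is the content of Theorem \ref{mainx}).

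Concretely, in coordinates $(s,t)$ I would take the ridge functions $A(s,t)=\tfrac14 t^2$ (flat in $s$) and $B(s,t)=\tfrac14 s^2$ (flat in $t$), a small smooth bump $\eta \ge 0$ of bounded support, $D=\epsilon\eta$, and $C = q-A-B-D = \tfrac14(s^2+t^2) - \epsilon\eta$. Since $A$ is flat in $s$, every horizontal segment through $\{D>0\}$ anchors in $\{D=0\}$, so $\bb{\conv}(A+D)=A$; by $s\leftrightarrow t$ symmetry $\bb{\conv}(B+D)=B$. But $q-A-B=\tfrac14(s^2+t^2)$ is already convex, hence $\bb{\conv}(q-A-B)=\tfrac14(s^2+t^2)\neq C$, giving the failure of the third relation. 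Choosing $\epsilon$ small (e.g. $\epsilon\|\nabla^2\eta\|_\infty \le \tfrac12$) yields $0\preceq \nabla^2 C \preceq I$, so $C$ is convex with $1$-Lipschitz gradient and $h=C^*-q\in{\cal A}(\R^2)$; the half-curvature of the ridges is exactly what provides two-sided room here, which a purely affine choice of $A,B$ would not. The functions $f,g$ come out supported on the coordinate axes (with $f(0,t)=\tfrac12 t^2$, $g(s,0)=\tfrac12 s^2$), legitimately in ${\cal A}(\R^2)$, and unwinding the reduction gives $(f\oplus g)^c = q < h$ on $\{\eta>0\}$.

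The main obstacle I anticipate is twofold. First, establishing the reduction identity cleanly: the key inputs are Moreau's decomposition $e_\psi + e_{\psi^*}=q$ (so that $q-(h+q)^* = e_h$ is convex and the conjugates behave as claimed) and the fact that convexification does not alter the conjugate. Second, and more delicate, is verifying that the bump respects admissibility, i.e. that $C$ remains convex with $\nabla^2 C \preceq I$ — this is the very constraint that obstructs the analogous one-dimensional construction, so it must be checked rather than assumed. Once both are secured, the two positive relations reduce to the elementary anchoring computation above.
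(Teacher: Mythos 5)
Your construction is correct, and it reaches the counterexample by a genuinely different route than the paper's. The paper (Example \ref{noinvolutive1}) stays in the ``additive'' picture $F=f+q$, $G=g+q$, $H=h^*+q$: it starts from $H_1=|x_1|\lor|x_2|+q$ and a two-point function $G_0$, generates $F_1,G_1$ implicitly by mutual $H_1$-conjugation, and detects the failure qualitatively --- $F_1$ is smooth at the origin in the $x_2$-direction while $H_1$ has a kink there, forcing $G_1(0)>0$; after recentering, a competitor $K_\ep=H\lor(q-m+\ep)$ shows $H$ is not the largest admissible function, so $h\neq(f\oplus g)^c$. You instead pass fully to the conjugate side: your $A=(f+q)^*$, $B=(g+q)^*$, $C=(h+q)^*$ are exactly $U$, $V$, $q-W$ in the paper's Section \ref{sec5} notation, so your identity $A=\overline{\conv}(q-B-C)$ is the paper's $U=(W-V)^{**}$, and your admissibility constraint is precisely membership in ${\cal A}_1^*(\cH)$. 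The payoff of your version is that the witness is completely explicit --- $A=\tfrac14t^2$, $B=\tfrac14s^2$, $C=q-A-B-\ep\eta$, with $f,g$ the quadratics $\tfrac12\tau^2$, $\tfrac12\xi^2$ supported on the coordinate axes --- and both positive relations reduce to the elementary observation that $\overline{\conv}(A+\ep\eta)=A$ because long horizontal (resp.\ vertical) segments escape the support of $\eta$, while the third fails because $q-A-B=\tfrac14(s^2+t^2)$ is already convex and $C$ is convex lsc, so $C=C^{**}\neq\overline{\conv}(q-A-B)$. The two points you flag as delicate are exactly the right ones and both check out: the reduction identity follows from $c(\bbx)=q(x_1+x_2+x_3)-\sum_i q(x_i)$ together with Lemma \ref{switch} (with outer function $q$, whose conjugate has full domain, so the lemma's domain convention is harmless), and the Hessian bounds $0\preceq\nabla^2C\preceq I$ for $\ep\|\nabla^2\eta\|_\infty\le\tfrac12$ guarantee $C\in{\cal A}_1^*(\R^2)$ and hence $h=C^*-q\in{\cal A}(\R^2)$. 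One cosmetic caveat: the final assertion ``$(f\oplus g)^c=q<h$ on $\{\eta>0\}$'' is loose, since the set where $h>q$ is not literally $\{\eta>0\}$ but its image under the relevant gradient map; the rigorous statement you actually need, $h\neq(f\oplus g)^c$, follows cleanly from $C\neq\tfrac14|x|^2$ and biconjugation, so nothing is lost. Your example also makes transparent why affine $A,B$ cannot work (admissibility of $C$ would force the bump to be convex, hence zero) and why one real dimension is too rigid, which complements the paper's Example \ref{counter}.
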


This proposition makes Question \ref{question1} obscure. Nevertheless, we can still hope that, if $f,g,h$ are assumed to be $c$-conjugate, $\Gamma_{\{f,g,h\}}$ might satisfy \eqref{maximality}. This sounds plausible in view of Theorem \ref{N-marginal smooth conjugate} where only one of the $c$-conjugate functions needs to be essentially smooth, which appears to be a pretty mild, and, seemingly redundant assumption when compared to the case $N=2$.

 This hope, however, is dashed by the following counterexample.

\begin{proposition}\label{counter1}
Let $\la > 0$, and $u =(1,0) \in \R^2$, $v = (\tfrac12, \tfrac{\sqrt{3}}2) \in \R^2$. There exists a $c$-conjugate triple $f,g,h \in {\cal A}(\R^2)$  for which $\Gamma = \Gamma_{\{f,g,h\}}$ satisfies
\be\label{SGamma1}
S(\Gamma) = \R^2 \setminus {\rm int}\, {\mathbf H}_\la \nn
\ee
where ${\mathbf H}_\la$ is the convex hull of its six vertices $\pm 2\la u$, $\pm 2\la v$, and $\pm 2\la (v - u)$. Furthermore, $\Gamma$ is not maximally $c$-monotone.
\end{proposition}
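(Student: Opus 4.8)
My plan is to recast the problem through the identity $c(\bbx)=q\big(\sum_i x_i\big)-\sum_i q(x_i)$, which is available for this cost. Setting $\tilde f_i:=f_i+q$ and $h_i:=\tilde f_i^{\,*}=e_{f_i^*}$, the standing inequality $\sum_i f_i\ge c$ becomes $\sum_i\tilde f_i(x_i)\ge q\big(\sum_i x_i\big)$, and a triple $(x_1,x_2,x_3)$ lies in $\Gamma$ iff each $x_i$ is optimal in its defining supremum, i.e. $s-x_i\in\partial f_i(x_i)$ with $s:=x_1+x_2+x_3$, equivalently $s\in\partial\tilde f_i(x_i)$, equivalently $x_i\in\partial h_i(s)$. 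Summing over $i$ and using the subdifferential sum rule yields the clean description
\[
S(\Gamma)=\{\,s\in\R^2 \ \mid \ s\in\partial H(s)\,\},\qquad H:=h_1+h_2+h_3\le q .
\]
Since $q$ is a smooth majorant of the convex function $H$, any $g\in\partial H(s)$ with $H(s)=q(s)$ must equal $s$ (compare the two inequalities $H(x)\ge H(s)+\langle g,x-s\rangle$ and $H(x)\le q(x)$), so $\{H=q\}\subseteq S(\Gamma)$; the construction will be arranged so that there are no further coincidence points and equality holds. Thus everything reduces to producing a convex $H\le q$ with $\{H=q\}=\R^2\setminus\mathrm{int}\,{\mathbf H}_\la$, together with a splitting $H=h_1+h_2+h_3$ into genuine Moreau envelopes $h_i=e_{f_i^*}$ whose induced triple $(f_1,f_2,f_3)$ is $c$-conjugate.

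\textbf{Geometry and a symmetric ansatz.} I record that $e_1:=u$, $e_2:=v-u$, $e_3:=-v$ form a unit ``Mercedes'' frame ($|e_i|=1$, $e_1+e_2+e_3=0$, mutual angles $120^\circ$), that the six vertices $\pm2\la e_i$ are exactly those of ${\mathbf H}_\la$, and that ${\mathbf H}_\la=\{\,s : |\langle s,e_i^\perp\rangle|\le\sqrt{3}\,\la,\ i=1,2,3\,\}$ is an intersection of three slabs. I would then exploit the $120^\circ$ rotational symmetry by defining $f_2,f_3$ as the rotations of $f_1$ by $120^\circ,240^\circ$, so that $h_2,h_3$ are the corresponding rotations of $h_1$ and $H$ is automatically invariant under the full dihedral symmetry of ${\mathbf H}_\la$. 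The single profile $f_1$ (equivalently $h_1$) is chosen piecewise — quadratic on part of $\R^2$ and affine/degenerate across the slab perpendicular to $e_1^\perp$ — and tuned so that the anisotropic dip of $H$ below $q$ fills precisely $\mathrm{int}\,{\mathbf H}_\la$. I would emphasize that this is genuinely the reverse of the elementary one-dimensional ``capped parabola,'' which by Theorem~\ref{mainx} can never arise from a $c$-conjugate triple; the hexagonal hole is possible only because the three slab directions interact in $\R^2$.

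\textbf{Verifying $c$-conjugacy (the crux).} By the rotational symmetry it suffices to verify a single relation, say $f_1=\big(\bigoplus_{i\neq1}f_i\big)^c$, the other two following by applying the rotation. In the envelope variables this relation is equivalent to $f_1=(\phi-q)^*$ where $\phi:=\tilde f_2\,\square\,\tilde f_3$ and $\phi^{\,*}=h_2+h_3$, so the verification amounts to computing the infimal convolution $\tilde f_2\,\square\,\tilde f_3$ explicitly on each polyhedral region and confirming it returns $\tilde f_1$ after the shift. This self-consistency is what Proposition~\ref{noinvolutive} shows can fail for a third relation, so I expect the main obstacle to be choosing the piecewise profile $f_1$ so that \emph{all three} conjugacy equations close up simultaneously while the envelope sum $H=\sum_i e_{f_i^*}$ still separates from $q$ exactly on $\mathrm{int}\,{\mathbf H}_\la$. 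Controlling both the analytic (conjugacy) and geometric (shape of the hole) requirements at once, across the several polygonal pieces and their boundaries, is the delicate part of the argument.

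\textbf{Computing $S(\Gamma)$ and non-maximality.} Once the profile is fixed, I would evaluate $\partial H$ region by region: on the exterior $H=q$ gives $\partial H(s)=\{s\}$, while inside ${\mathbf H}_\la$ one checks $s\notin\partial H(s)$, yielding $S(\Gamma)=\R^2\setminus\mathrm{int}\,{\mathbf H}_\la$ as claimed. For the final assertion, note that since $0\in\mathrm{int}\,{\mathbf H}_\la$ we have $0\notin S(\Gamma)$, so no contact configuration sums to the origin and the fiber over $0$ is unoccupied. I would then adjoin a single configuration $\bbx^0$ with $x_1^0+x_2^0+x_3^0\in\mathrm{int}\,{\mathbf H}_\la$ (by symmetry the centrally symmetric choice mapping to $0$ is natural) and check that $\Gamma\cup\{\bbx^0\}$ remains $c$-monotone: the binding $2$-$c$-monotone inequalities against an arbitrary $(a_1,a_2,a_3)\in\Gamma$ reduce, after expanding $c$, to $\langle a_i-x_i^0,\ (s_a-a_i)-(s_0-x_i^0)\rangle\ge0$ for each coordinate swap, and these follow from the explicit membership $a_i\in\partial h_i(s_a)$ together with the monotonicity of each $\partial h_i$. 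This exhibits a proper $c$-monotone extension, so $\Gamma$ is not maximally $c$-monotone; consistently, none of the equivalent assertions of Theorem~\ref{t:split_max_c-mono} can hold, since \ref{t:split_max_c-mono-vii} already fails.
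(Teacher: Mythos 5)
Your reduction to the envelope picture is sound: with $\tilde f_i=f_i+q$ and $h_i=e_{f_i^*}$ one does get $S(\Gamma)=\{s\in\R^2 \mid H(s)=q(s)\}$ for $H=h_1+h_2+h_3\le q$, and your non-maximality step is essentially completable --- though you should invoke the \emph{firm nonexpansiveness} of $\nabla h_i=\prox_{f_i}$ rather than mere monotonicity of $\partial h_i$: the swap inequality $\langle a_i-x_i^0,\,(s_a-a_i)-(s_0-x_i^0)\rangle\ge 0$ is exactly $\langle \prox_{f_i}(s_a)-\prox_{f_i}(s_0),\,s_a-s_0\rangle\ge|\prox_{f_i}(s_a)-\prox_{f_i}(s_0)|^2$, which monotonicity alone does not give. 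This packaging is in the spirit of the paper's Section 5 discussion and would streamline the paper's case-by-case verification of its inequalities $z\cdot(x+y)\ge0$, $x\cdot(y+z)\ge0$, $y\cdot(z+x)\ge0$.

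The genuine gap is that the proposition is an existence statement and you never produce the triple. Everything that constitutes the actual proof --- choosing $f_1$ (``chosen piecewise \dots and tuned so that the anisotropic dip of $H$ fills precisely $\mathrm{int}\,\mathbf{H}_\la$''), verifying that all three conjugacy relations close up, and checking that the coincidence set $\{H=q\}$ is exactly the complement of the open hexagon --- is deferred, and you yourself flag it as ``the delicate part of the argument.'' In the paper this is Steps 1--4 of Example \ref{counter}: $f$ and $g$ are the indicator functions of the segments $[-\la,\la]u$ and $[-\la,\la]v$, $h=(f\oplus g)^c$ is computed in closed form as a piecewise affine function on four sectors, $h^*$ is identified as a degenerate function supported on a rhombus $D_\la$, the relations $g=(h\oplus f)^c$ and $f=(h\oplus g)^c$ are verified by an explicit comparison of affine minorants along six line segments, and $S(\Gamma)$ is obtained by solving $h^*(x+y)=-x\cdot y$, whose solution set is $\partial D_\la$. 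Note also that the paper's triple is \emph{not} invariant under the $120^\circ$ rotation: $f$ and $g$ are indicators of segments at $60^\circ$ to each other while $h$ is finite everywhere, so your symmetric ansatz is a different (and unverified) route; it is not clear that a rotationally symmetric $c$-conjugate triple with this exact contact set exists, and in view of Proposition \ref{noinvolutive} the simultaneous closure of all three conjugacy equations cannot be taken for granted. Without an explicit construction and these verifications, the proposal does not establish the proposition.
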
 
This implies that the regularity assumption in Theorem \ref{N-marginal smooth conjugate} is not redundant.

It is worth noting that \cite{BBPW} leaves the open question of whether every contact set $\Gamma$ generated by the $c$-conjugate tuple $(f_1,\ldots,f_N)$ is maximally $c$-monotone. The proposition shows that the answer is negative unless $\cH = \R$, in which case Theorem \ref{mainx} provides an affirmative answer. More detailed statement of the proposition and related examples can be found in Section \ref{sec4}.

In summary, we provide a fairly complete answer to Question \ref{question1} by demonstrating the significant difference between the cases $N=2$ and $N \ge 3$, and also, between $\cH=\R$ and $\cH=\R^n$. While Question \ref{question1} holds true for $N=2$ and ${\cal A}(\cH)$ is $2$-cyclically involutive as shown by Fenchel, Moreau, Minty and Rockafellar, we show both statements fail  in general when $N \ge 3$. Nevertheless, Theorems \ref{mainx}, \ref{mainy} show that the results are still affirmative in full generality for any $N \ge 3$ if $\cH = \R$. Furthermore, Theorems \ref{N-marginal smooth conjugate}, \ref{main3} show that the results are also affirmative within the class of essentially smooth convex functions when $\cH = \R^n$. Finally, we believe that identifying other significant subsets of ${\cal A}(\cH)$ that are cyclically involutive and understanding their relationship with maximal monotonicity is an interesting question for future research.
\\

\subsection{Connection with theory of multi-marginal optimal transport} Let $(X_1,\mu_1),\ldots,(X_N,\mu_N)$ be
Borel probability spaces, and $X:= X_1\times \cdots \times X_N$. Denote by $\Pi(X)$ the set of all Borel probability measures $\pi$ on $X$ whose marginals are the $\mu_i$'s \cite{San, Vil}. Given a cost function $c : X \to \R$, the optimal transport problem refers to the following optimization problem:
\be\label{OT}
P_c := \inf_{\pi\in\Pi(X)} \int_X c(x)d\pi(x).
\ee
To distinguish it from the two-marginal case, the problem is commonly referred to as multi-marginal optimal transport problem when $N \ge 3$. In this problem, the optimal transport cost $P_c$, as well as the geometry and structure of optimal transport plans -- the solutions to \eqref{OT} -- are sought. It is well known that the problem \eqref{OT} is attained, i.e., the OT plans exist, under suitable assumptions on the cost and marginals.

Because \eqref{OT} is an infinite-dimensional linear programming problem, it has a dual problem whose formulation turns out to have the following form:
\be\label{dualOT}
D_c := \sup_{\begin{array}{c}
	f_i\in L_1({\mu_i}),\\ 
	\sum_{1\leq i\leq N} f_i(x_i) \leq c(\bbx)	
	\end{array}} \sum_{1\leq i\leq N}\int_{X_i}f_i(x_i)d\mu_i(x_i).
\ee
Kellerer's~\cite{Kel} generalization
of the Kantorovich duality states that, under mild assumptions on the marginals $\mu_1,...,\mu_N$ and cost function $c$, it holds
\be
P_c = D_c.
\ee
This has the important implication that every optimal transport $\pi$ solving \eqref{OT} is concentrated on the contact set
\be\label{contactset}
\Gamma = \bigg\{ \bbx \in X \ \bigg| \ \sum_{1\leq i\leq N} f_i(x_i) = c(\bbx) \bigg\} 
\ee
where $(f_1,\dots, f_N)$ is a solution to the dual problem \eqref{dualOT}, which is also attained under suitable assumptions. This provides a crucial information for investigating the geometry of optimal transport plans.

The interaction between the optimal transport and its dual problems is what makes the theory surprisingly powerful for many applications in fields such as analysis, geometry, PDEs, probability, statistics, economics, data sciences, and many researchers have helped to advance the field \cite{GanMc, GhMo, Mc, RR1, RR2, San, Vil}. Regarding the geometry of optimal transport for $N=2$, arguably one of the most well-known and widely applied result is the Brenier's theorem \cite{Bre}: given marginals $\mu_1, \mu_2 \in \cP_2(\R^n)$ and the cost function $c(x_1,x_2) = |x_1 - x_2|^2$, there exists a convex function $\vp$ such that for any solution $\pi$ to \eqref{OT}, it holds
\be\label{Brenier}
y \in \p \vp (x) \ \ \pi-a.e.\, (x,y), \text{ and moreover, } y= \nabla \vp(x) \text{ if $\mu_1$ has density.} 
\ee
Because the geometry of the subdifferential $\p \vp$ is well understood by studies in convex analysis, Brenier's theorem could yield important further results. Likewise, a better understanding of multi-conjugate convex analysis should also have a significant impact on the theory of multi-marginal optimal transport, their geometry, and applications. This is one of the motivations of this paper.

Recent advances in the theory of multi-marginal optimal transport and its geometrical structures have been rapid and fruitful, yielding numerous new research directions and open problems \cite{AC, BBW2, CGC, GS, KP, KP2, KP3, MPC, Nen, Pas2, Pas3}. In light of  two-marginal optimal transport theory, Brenier's theorem and their consequences, it appears clear that understanding the geometry of the contact set \eqref{contactset} is crucial, much of which falls within the scope of the multi-conjugate convex analysis. In this regard, the author hopes that this paper will contribute to a better understanding of the geometry of various multi-marginal optimal transport problems.

\subsection{Organization of the paper} 
This paper is organized as follows. In Section \ref{sec2}, we prove Theorem \ref{N-marginal smooth conjugate}. In Section \ref{sec3}, we prove Theorems \ref{mainx} and \ref{mainy}. 
In Section \ref{sec4}, we explain Proposition \ref{counter1} and provide additional examples to demonstrate the difference between $\cH = \R$ and $\cH = \R^n$. Finally, in Section \ref{sec5}, we revisit cyclical involutivity and maximal monotonicity in the $N=3$ case in the context of duality, as well as provide details for Proposition \ref{noinvolutive}.

\section{Extension of Theorem \ref{3-marginal smooth conjugate}: Proof of Theorem \ref{N-marginal smooth conjugate}}\label{sec2}

We will need the following lemma whose proof is given in \cite[Proposition 14.19]{BC2017} for the case $N=1$. Extension for arbitrary $N \in \N$ appears to be useful.
\begin{lemma}\label{switch}
For $g, f_1,f_2,...,f_N \in {\cal A}(\cH)$, we have the following identity
\be
\sup_{\substack{x_i \in \dom f_i \\ i=1,...,N}} \bigg( g(x+x_1+...+x_N) - \sum_{i=1}^N f_i(x_i) \bigg) = \bigg(g^* - \sum_{i=1}^N f_i^* \bigg)^* (x) \ \  \text{for all } x \in \cH,
\ee
where $g^* - \sum_{i=1}^N f_i^* : \cH \to [-\infty, +\infty]$ is defined as (note $-\infty$ can be assumed)
\be
\bigg(g^* - \sum_{i=1}^N f_i^*\bigg) (y) := \begin{cases}
\big(g^* - \sum_{i=1}^N f_i^* \big) (y) \ \text{ if }\, y \in \dom g^*,\\
+\infty  \ \text{ if }\, y \notin \dom g^*.
\end{cases}
\nn
\ee
\end{lemma}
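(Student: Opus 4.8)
The plan is to prove the identity \emph{directly}, by unfolding both sides into suprema and interchanging the order of supremation, rather than inducting on $N$ from the known case \cite[Proposition 14.19]{BC2017}. Write $G := g^* - \sum_{i=1}^N f_i^*$ with the stated convention that $G \equiv +\infty$ off $\dom g^*$. Starting from the right-hand side I would expand
\[
\Big(g^* - \sum_{i=1}^N f_i^*\Big)^*(x) = \sup_{y \in \dom g^*}\Big[\langle x,y\rangle - g^*(y) + \sum_{i=1}^N f_i^*(y)\Big],
\]
the restriction to $\dom g^*$ being harmless because the bracket is $-\infty$ elsewhere, and $\dom g^* \neq \emptyset$ since $g \in {\cal A}(\cH)$. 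The key algebraic step is the separability identity
\[
\sum_{i=1}^N f_i^*(y) = \sup_{x_1,\dots,x_N}\Big[\big\langle \textstyle\sum_{i=1}^N x_i,\, y\big\rangle - \sum_{i=1}^N f_i(x_i)\Big],
\]
which is valid in $\R \cup \{+\infty\}$ because each $f_i^* \in {\cal A}(\cH)$ never takes the value $-\infty$, so no $\infty - \infty$ arises.

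Substituting this in and absorbing the finite constant $\langle x,y\rangle - g^*(y)$ into the inner supremum produces a double supremum over $(y;\,x_1,\dots,x_N)$ of $\langle x + \sum_i x_i,\, y\rangle - g^*(y) - \sum_i f_i(x_i)$. I would then interchange the two suprema and evaluate the one over $y$ first. For each fixed tuple with all $x_i \in \dom f_i$, the finite quantity $\sum_i f_i(x_i)$ factors out, leaving $\sup_y[\langle x + \sum_i x_i,\, y\rangle - g^*(y)] = g^{**}(x + \sum_i x_i) = g(x + \sum_i x_i)$ by Fenchel--Moreau \cite[Theorem 13.37]{BC2017}. Tuples with some $x_i \notin \dom f_i$ force $\sum_i f_i(x_i) = +\infty$ and hence inner value $-\infty$, so they drop out; this is precisely why the left-hand supremum is restricted to $x_i \in \dom f_i$. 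Collecting the surviving terms reproduces the left-hand side.

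The main obstacle is bookkeeping in the extended reals, not any deep inequality. I must certify that every manipulation stays legitimate when $\sum_i f_i^*(y) = +\infty$ (so $G(y) = -\infty$ is permitted, as the statement flags) and when $g(x+\sum_i x_i) = +\infty$. The two delicate points are: (i) pushing the finite quantity $\langle x,y\rangle - g^*(y)$ through the supremum defining $\sum_i f_i^*(y)$ remains valid even when that supremum equals $+\infty$; and (ii) the interchange of the two suprema needs no finiteness or measurability hypothesis, being a pure reindexing of $\sup_a\sup_b = \sup_b\sup_a$. As a consistency check I would record that if some $y_0 \in \dom g^*$ has $\sum_i f_i^*(y_0) = +\infty$, then both sides equal $+\infty$ — on the left this follows from the Fenchel--Young bound $g(z) \ge \langle z, y_0\rangle - g^*(y_0)$ together with a maximizing sequence for the offending $f_{i_0}^*(y_0)$.

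Finally, I would remark why the inductive route is less convenient. Inducting on $N$ and applying the $N=1$ case with $g$ replaced by $\big(g^* - \sum_{i=1}^{N-1} f_i^*\big)^*$ is tempting, but the intermediate function $g^* - \sum_{i=1}^{N-1} f_i^*$ is in general neither convex nor equal to its biconjugate, so Proposition 14.19 would return a formula involving that biconjugate which must then be reconciled with the stated $+\infty$-off-$\dom g^*$ convention. The direct double-supremum argument sidesteps this entirely, at the modest cost of careful extended-arithmetic bookkeeping.
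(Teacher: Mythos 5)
Your proposal is correct and follows essentially the same route as the paper: expand the right-hand side as a supremum over $\dom g^*$, use the separability of the conjugates $f_i^*$ to introduce the inner suprema over $x_i\in\dom f_i$, interchange the order of supremation, and conclude with $g^{**}=g$ by Fenchel--Moreau. The extra care you take with the extended-real bookkeeping is sound but does not change the argument.
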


\begin{proof} Recall $S(\bbx) := x_1+ \dots +x_N$. We can proceed as follows:
\begin{align*}
 \bigg(g^* - \sum_{i=1}^N f_i^* \bigg)^* (x) 
 &= \sup_{y \in \dom g^*} \langle x, y \rangle - \bigg(g^* - \sum_{i=1}^N f_i^*\bigg) (y) \\
 &= \sup_{y \in \dom g^*} \langle x, y \rangle - g^*(y) +  \sum_{i=1}^N \sup_{x_i \in \dom f_i}  \langle y, x_i \rangle  - f_i(x_i) \\
 &= \sup_{y \in \dom g^*} \langle x, y \rangle - g^*(y) +  \sup_{\substack{x_i \in \dom f_i \\ i=1,...,N}} \langle y, S(\bbx) \rangle - \sum_{i=1}^N f_i(x_i) \\
 &=  \sup_{\substack{x_i \in \dom f_i \\ i=1,...,N}} \sup_{y \in \dom g^*} \langle y , x + S(\bbx) \rangle - g^*(y) -  \sum_{i=1}^N f_i(x_i) \\
 &= \sup_{\substack{x_i \in \dom f_i \\ i=1,...,N}} g(x + S(\bbx)) -  \sum_{i=1}^N f_i(x_i).
 \end{align*}
 This proves the lemma.
\end{proof}

Our proof of Theorem \ref{N-marginal smooth conjugate} will closely follow the proof presented in \cite{BBPW}, utilizing Lemma \ref{switch} along with the following fact given in \cite{Sol}.

\noin\cite[Corollary 2.3]{Sol}. {\em Let $f:\R^n \to \R \cup \{\infty\}$ be proper and lower-semicontinuous. If $f^*$ is essentially smooth, then $f$ is convex.}

\begin{proof}[Proof of Theorem \ref{N-marginal smooth conjugate}.]
 Recalling $f_1=(\oplus_{i=2}^N f_i)^c$, we proceed
\begin{align}
&(f_1+q)(x_1) = \sup_{x_2,...,x_N \in\R^n} c(\bbx)+q(x_1) - \sum_{i=2}^N f_i(x_i) \nonumber\\
&=\sup_{x_2,...,x_N} x_2 \cdot (x_1+x_3+...+x_N) - f_2(x_2) + \sum_{\substack{1 \le j < k \le N \\ j \neq 2, k \neq 2}} x_j \cdot x_k + q (x_1) - \sum_{i=3}^N f_i(x_i)
\nonumber\\
&=\sup_{x_3,...,x_N} f_2^*(x_1+x_3+...+x_N) + q(x_1+x_3+...+x_N) - \sum_{i=3}^N (f_i+q)(x_i)
\nonumber\\
&= \bigg((f_2^*+q)^* - \sum_{i=3}^N (f_i +q)^*\bigg)^*(x_1)
\nonumber\\
&= \bigg( e_{f_2} - \sum_{i=3}^N e_{f^*_i} \bigg)^* (x_1)
 = \bigg( q - \sum_{i=2}^N e_{f^*_i} \bigg)^* (x_1) \nn
	\end{align}
where the last two equalities are due to Lemma \ref{switch} and Moreau's decomposition \eqref{t:split_max_c-mono-v} for $N=2$. Now since $f_1+q$ is essentially smooth, \cite[Corollary 2.3]{Sol} implies $q - \sum_{i=2}^N e_{f^*_k}$ is convex. And $e_{f^*_k}$, which is a Moreau envelope of a function in ${\cal A}(\R^n),$ is continuous in $\R^n$. Consequently,
\be
e_{f_1^*}=(f_1+q)^*=(q - \sum_{i=2}^N e_{f^*_i})^{**}=q - \sum_{i=2}^N e_{f^*_i}, \nn
\ee
	that is, $\sum_{i=1}^N e_{f^*_i}=q$, which is the Moreau decomposition \eqref{t:split_max_c-mono-v}.
\end{proof}

\section{One-dimensional domain: Proof of Theorems \ref{mainx} and \ref{mainy} }\label{sec3}

We begin with a lemma about strong convexity and how it is inherited.
\begin{lemma}\label{laconvex}
Let $f,g,h$ be proper functions on a Hilbert space $\cH$, satisfying
\be
f(x) = \sup_{y \in \dom g} h(x+y)-g(y). \nn
\ee
If $h$ is lower-semicontinuous and $\la$-strongly convex,  then so is $f$.
\end{lemma}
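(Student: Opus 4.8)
The plan is to show directly that $f$ inherits $\lambda$-strong convexity from $h$, i.e.\ that $f - \lambda q$ is convex, by exploiting the pointwise supremum structure. The essential observation is that strong convexity can be characterized through the midpoint inequality: a proper function $\phi$ is $\lambda$-strongly convex if and only if for all $x_0, x_1 \in \cH$ and $t \in [0,1]$, writing $x_t = (1-t)x_0 + tx_1$, one has
\[
\phi(x_t) \le (1-t)\phi(x_0) + t\phi(x_1) - \tfrac{\lambda}{2}t(1-t)|x_0 - x_1|^2.
\]
So I would fix $x_0, x_1$ and $t$, and try to bound $f(x_t)$ from above by the right-hand side.

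First I would unpack the definition at $x_t$. Since $f(x_t) = \sup_{y \in \dom g} h(x_t + y) - g(y)$, the natural move is to feed in a convex combination of near-optimizers for $f(x_0)$ and $f(x_1)$. Concretely, I would fix $\varepsilon > 0$ and pick $y_0, y_1 \in \dom g$ with $h(x_0 + y_0) - g(y_0) \ge f(x_0) - \varepsilon$ and $h(x_1 + y_1) - g(y_1) \ge f(x_1) - \varepsilon$. Setting $y_t = (1-t)y_0 + t y_1 \in \dom g$ (using convexity of $\dom g$), the point $x_t + y_t = (1-t)(x_0+y_0) + t(x_1+y_1)$ is the corresponding convex combination of the arguments of $h$. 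Then $f(x_t) \ge h(x_t + y_t) - g(y_t)$ gives a lower bound, but I actually want an \emph{upper} bound on $f(x_t)$; so instead I would run the estimate to bound $(1-t)f(x_0) + t f(x_1)$ from below. Applying $\lambda$-strong convexity of $h$ to the two points $x_0+y_0$ and $x_1+y_1$,
\[
h(x_t + y_t) \le (1-t)h(x_0+y_0) + t\, h(x_1+y_1) - \tfrac{\lambda}{2}t(1-t)\big|(x_0+y_0)-(x_1+y_1)\big|^2,
\]
and using that $g$ is convex so that $-g(y_t) \ge -(1-t)g(y_0) - t\,g(y_1)$, one combines these and uses $f(x_t) \ge h(x_t+y_t) - g(y_t)$ to relate $f(x_t)$ to $(1-t)f(x_0) + t f(x_1)$ up to the quadratic defect term and an $O(\varepsilon)$ error.

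The one subtlety I expect here is the direction of the inequality and the size of the quadratic defect: the term that appears naturally is $\big|(x_0+y_0)-(x_1+y_1)\big|^2 = \big|(x_0 - x_1) + (y_0 - y_1)\big|^2$, whereas strong convexity of $f$ requires the defect $|x_0 - x_1|^2$ in the $x$-variable alone. Since I do not control $y_0 - y_1$, I cannot simply assert $|(x_0-x_1)+(y_0-y_1)|^2 \ge |x_0-x_1|^2$. The clean way around this is to run the argument in reverse: bound $f(x_t)$ from above by choosing a single near-optimizer $y^*$ for $f(x_t)$ and using the \emph{same} $y^*$ as the competitor in both $f(x_0) \ge h(x_0+y^*)-g(y^*)$ and $f(x_1) \ge h(x_1+y^*)-g(y^*)$. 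Then the three arguments of $h$ are $x_0+y^*$, $x_1+y^*$, and $x_t+y^* = (1-t)(x_0+y^*) + t(x_1+y^*)$, whose differences are exactly $x_0 - x_1$, so the quadratic defect comes out as the required $|x_0-x_1|^2$ and the $g(y^*)$ terms cancel cleanly in the convex combination. This is the main obstacle, and fixing the competitor rather than the optimizer resolves it. Finally, lower-semicontinuity of $f$ is inherited because $f$ is a pointwise supremum of the lower-semicontinuous functions $x \mapsto h(x+y) - g(y)$ (with $h$ lsc), and a supremum of lsc functions is lsc; letting $\varepsilon \to 0$ then yields the strong convexity inequality for $f$, completing the proof.
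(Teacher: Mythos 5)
Your argument is correct, and it takes a genuinely different route from the paper. You verify the strong-convexity interpolation inequality directly: for a single competitor $y\in\dom g$ you write $x_t+y$ as the convex combination of $x_0+y$ and $x_1+y$, apply $\la$-strong convexity of $h$ (whose quadratic defect is then exactly $\tfrac{\la}{2}t(1-t)|x_0-x_1|^2$ since the $y$'s cancel), bound $h(x_i+y)-g(y)\le f(x_i)$, and take the supremum over $y$; lower semicontinuity follows since $f$ is a pointwise supremum of the lsc functions $x\mapsto h(x+y)-g(y)$. Your diagnosis of the trap --- using two different near-optimizers $y_0,y_1$ produces the defect $|(x_0-x_1)+(y_0-y_1)|^2$, which cannot be compared to $|x_0-x_1|^2$ --- and your fix of freezing a single competitor are exactly right; note that this also removes any need for convexity of $g$ or of $\dom g$, which is important because the lemma assumes $g$ only proper. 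The paper instead argues by conjugation: it sets $k=h-\la q\in{\cal A}(\cH)$, expands $k=k^{**}$ as a supremum of affine functions, and rearranges to exhibit $f=\la q+\xi$ with $\xi(x)=\sup_{z}\big(\langle x,z\rangle-k^*(z)+(g-\la q)^*(\la x+z)\big)$ a supremum of convex lsc functions of $x$. The paper's computation yields convexity and lower semicontinuity in one stroke and produces an explicit dual formula for $f-\la q$; your argument is more elementary and makes the geometric mechanism (the quadratic gain surviving the supremum) transparent. Either proof is acceptable.
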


\begin{proof}
Let $k = h - \la q$, which belongs to ${\cal A}(\cH)$, thus $k = k^{**}$. We compute
\begin{align*}
f(x) &= \sup_{y \in \dom g} h(x+y)-g(y) \\
& = \sup_{y \in \dom g} \sup_{z \in \dom k^*} \la q(x+y) + \langle x+y , z \rangle - k^*(z) - g(y) \\
&=\la q (x) +  \sup_{z \in \dom k^*} \big\{ \langle x,z \rangle - k^*(z) +  \sup_{y \in \dom g} \{ \langle y,\la x+z \rangle - (g- \la q) (y) \} \big\} \\
&= \la q (x) +  \sup_{z \in \dom k^*} \langle x,z \rangle - k^*(z) + (g - \la q)^* (\la x + z) \\
&=: \la q (x) + \xi (x).
\end{align*}
Observe $\xi$ is convex lower-semicontinuous as a supremum of such functions.
\end{proof}

Let $\Delta$ be the diagonal subspace of $\cH^N$. The concept of a  ``directional convex envelope" is clearly relevant to  the study of $c$-conjugate convex functions. We introduce the following definition as we are not aware of it appearing elsewhere.

\begin{definition}[$\Delta$-convex envelope]\label{deltadef}
Let $\bbx = (x_1,...,x_N) \in \cH^N$, $S(\bbx) = \sum_{i=1}^N x_i$. Let $f : \cH^N \to \R \cup \{+\infty\}$ be proper, satisfying $f( \bbx ) \ge \langle S(\bbx), y \rangle + b$ for  some $y \in \cH$, $b \in \R$. Then $g$ is called the $\Delta$-{\em \,convex envelope of} $f$ if $g$ is the largest convex lower-semicontinuous function on $\cH$ satisfying $f( \bbx) \ge g(S(\bbx))$.
\end{definition}

\begin{lemma}\label{deltalem}
Let $f$ satisfy the condition in Definition \ref{deltadef}. Then $g$ is the $\Delta$-convex envelope of $f$ if and only if $g^*(y) = \sup_\bbx \langle S(\bbx), y \rangle - f(\bbx)$.
\end{lemma}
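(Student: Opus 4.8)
The plan is to realize the $\Delta$-convex envelope as an ordinary biconjugate, after compressing $f$ along the fibers of the summation map $S$. First I would introduce the fiberwise infimum
\[
F(s) := \inf\{ f(\bbx) \ | \ \bbx \in \cH^N, \ S(\bbx) = s \}, \qquad s \in \cH,
\]
and observe that a function $g$ on $\cH$ satisfies $f(\bbx) \ge g(S(\bbx))$ for every $\bbx \in \cH^N$ if and only if $g(s) \le F(s)$ for every $s \in \cH$. The forward implication follows by fixing $s$ and taking the infimum over the fiber $\{\bbx \, | \, S(\bbx) = s\}$, while the reverse is immediate since $g(S(\bbx)) \le F(S(\bbx)) \le f(\bbx)$. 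Consequently the $\Delta$-convex envelope of $f$ is exactly the largest convex lower-semicontinuous minorant of $F$.

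Next I would compute the conjugate of $F$ by interchanging the supremum over $s$ with the infimum defining $F$:
\[
F^*(y) = \sup_{s \in \cH} \big( \langle s, y\rangle - F(s) \big) = \sup_{s \in \cH} \ \sup_{\bbx : S(\bbx) = s} \big( \langle S(\bbx), y\rangle - f(\bbx) \big) = \sup_{\bbx \in \cH^N} \big( \langle S(\bbx), y\rangle - f(\bbx) \big),
\]
which is precisely the expression appearing in the statement. The hypothesis of Definition \ref{deltadef}, namely $f(\bbx) \ge \langle S(\bbx), y_0\rangle + b$ for some $y_0 \in \cH$ and $b \in \R$, translates into $F(s) \ge \langle s, y_0\rangle + b$, so $F$ admits an affine minorant and is proper (it is finite at $S(\bbx_0)$ for any $\bbx_0 \in \dom f$). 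Hence by the Fenchel--Moreau theorem the biconjugate $F^{**}$ is well defined and coincides with the largest convex lower-semicontinuous minorant of $F$.

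Combining these two observations settles both directions. If $g$ is the $\Delta$-convex envelope, then $g = F^{**}$ by the first paragraph, so $g^* = F^{***} = F^* = \sup_{\bbx}\big(\langle S(\bbx), y\rangle - f(\bbx)\big)$, which is the claimed identity. Conversely, if $g \in {\cal A}(\cH)$ satisfies $g^*(y) = \sup_{\bbx}\big(\langle S(\bbx), y\rangle - f(\bbx)\big) = F^*(y)$, then $g = g^{**} = F^{**}$, using that $g = g^{**}$ for convex lower-semicontinuous $g$; thus $g$ is the largest convex lower-semicontinuous minorant of $F$, i.e. the $\Delta$-convex envelope.

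I expect the only delicate point to be the identification of the largest convex lower-semicontinuous minorant with the biconjugate $F^{**}$, which relies on $F$ being proper and minorized by an affine function; this is exactly what the standing assumption of Definition \ref{deltadef} secures. Everything else reduces to a routine interchange of suprema and the standard triple-conjugate identity $F^{***} = F^*$, so no substantial obstacle remains.
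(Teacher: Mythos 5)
Your proof is correct and rests on the same mechanism as the paper's: Fenchel--Moreau biconjugation together with the observation that the stated supremum is precisely the conjugate of the infimal projection of $f$ along $S$. The paper reaches the conclusion slightly more directly, by writing for any $h\in{\cal A}(\cH)$ the chain of equivalences $f(\bbx)\ge h(S(\bbx))\iff f(\bbx)\ge\langle S(\bbx),y\rangle-h^*(y)\iff h^*(y)\ge\sup_{\bbx}\big(\langle S(\bbx),y\rangle-f(\bbx)\big)$ and noting that maximality of $g$ corresponds to minimality of $g^*$, thereby avoiding the explicit introduction of $F$ and the verification that $F^{**}$ is its largest closed convex minorant --- but the underlying argument is the same.
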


\begin{proof} 
Let $h$ be any element in ${\cal A}(\cH)$. The following equivalence
\begin{align*}
f(\bbx) &\ge h(S(\bbx)) \ \text{ for every } \bbx \in \cH^N \\
\iff f(\bbx) & \ge \langle S(\bbx), y \rangle - h^*(y) \ \text{ for every } \bbx \in \cH^N, y \in \cH \\
\iff h^*(y) &\ge \sup_\bbx \langle S(\bbx), y \rangle - f(\bbx)  \ \text{ for every } y \in \cH
\end{align*}
yields the lemma, since maximality of $g$ corresponds to minimality of $g^*$.
\end{proof}

\begin{definition} For proper functions $f,g,h : \cH \to \R \cup \{+\infty\}$, we say that $f$ and $g$ are $h$-conjugate if the following holds:
\be\label{conj}
f(x) = \sup_{y \in \dom g} h(x+y) - g(y), \q g(y) = \sup_{x \in \dom f}  h(x+y) - f(x).
\ee
\end{definition}

\begin{proposition}\label{envelope}
Assume that $f,g,h \in {\cal A}(\R)$ and that $f,g$ are $h$-conjugate. Assume further that $f$ and $h$ are continuous, and that $h$ is $\la$-strongly convex for some $\la >0$. Then $h$ is the $\Delta$-convex envelope of $f \oplus g$.
\end{proposition}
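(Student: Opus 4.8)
The plan is to reduce the statement, via Lemma \ref{deltalem}, to the single conjugate identity $h^*=f^*+g^*$, and then to prove that identity by analysing where a convex envelope falls strictly below. First I would record the regularity at our disposal: since $f,g$ are $h$-conjugate and $h$ is lsc and $\lambda$-strongly convex, Lemma \ref{laconvex} applied to each of the two defining suprema shows that both $f$ and $g$ are $\lambda$-strongly convex (and lsc). Consequently $f^*$, $g^*$ and $h^*$ are all finite (hence continuous and convex) on all of $\R$, because a $\lambda$-strongly convex proper function is superlinear. Strong convexity also makes $f,g$ bounded below, so $(f\oplus g)(\bbx)\ge \langle S(\bbx),0\rangle+b$ with $b=\inf f+\inf g$, which is the standing hypothesis of Definition \ref{deltadef}. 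Computing the conjugate prescribed by Lemma \ref{deltalem}, $\sup_{\bbx}\langle S(\bbx),p\rangle-(f\oplus g)(\bbx)=\sup_{x_1,x_2}p(x_1+x_2)-f(x_1)-g(x_2)=f^*(p)+g^*(p)$, so the $\Delta$-convex envelope $G$ of $f\oplus g$ is the unique element of ${\cal A}(\R)$ with $G^*=f^*+g^*$. Since $h\in{\cal A}(\R)$ satisfies $h=h^{**}$, proving $h^*=f^*+g^*$ will give $G=h$ and finish the proof.

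Next I would extract a clean closed form for $f$. Writing $h=h^{**}$ inside the defining supremum and interchanging the two suprema gives $f(x)=\sup_{y}h(x+y)-g(y)=\sup_{p,y}\big(px+py-h^*(p)-g(y)\big)=\sup_p\big(px-h^*(p)+g^*(p)\big)=(h^*-g^*)^*(x)$, and symmetrically $g=(h^*-f^*)^*$. Taking conjugates, $f^*=(h^*-g^*)^{**}$ and $g^*=(h^*-f^*)^{**}$; in particular $f^*\le h^*-g^*$, which already yields the easy inequality $f^*+g^*\le h^*$ (this also follows directly from $f(x)+g(y)\ge h(x+y)$). It remains to prove the reverse inequality, i.e. that the nonnegative continuous function $\phi:=h^*-f^*-g^*$ vanishes identically.

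The heart of the argument, and the step I expect to be the main obstacle, is ruling out $\{\phi>0\}$; this is exactly where the one-dimensionality of $\cH$ is essential (and where the higher-dimensional counterexamples of Section \ref{sec4} must intervene). The identities $f^*=(h^*-g^*)^{**}$ and $g^*=(h^*-f^*)^{**}$ say that $f^*$ is the convex envelope of $h^*-g^*$ and $g^*$ that of $h^*-f^*$; since the gap between $h^*-g^*$ and its convex envelope $f^*$ equals $\phi$, and likewise the gap between $h^*-f^*$ and its convex envelope $g^*$ equals $\phi$, on every connected component of the open set $\{\phi>0\}$ both convex envelopes $f^*$ and $g^*$ must be affine, and hence $h^*=f^*+g^*+\phi$ is an affine function plus $\phi$ there. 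A bounded component $(p_1,p_2)$ is impossible: by continuity $\phi$ vanishes at $p_1,p_2$, so on $[p_1,p_2]$ the convex function $h^*$ would coincide with an affine function at the endpoints yet lie strictly above it in between, contradicting convexity of $h^*$. An unbounded component is impossible too: if $f^*$ were affine on a half-line, then $f=(f^*)^*$ would equal $+\infty$ on a half-line, contradicting the finiteness (continuity) of $f$. Therefore $\{\phi>0\}=\emptyset$, so $\phi\equiv0$ and $h^*=f^*+g^*$. By the reduction above, $G=(f^*+g^*)^*=h^{**}=h$, i.e. $h$ is the $\Delta$-convex envelope of $f\oplus g$, as claimed.
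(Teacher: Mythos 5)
Your proof is correct, but it takes a genuinely different route from the paper's. The paper argues on the primal side: after using Lemma \ref{laconvex} and a normalization to reduce to $f(0)=\min f=g(0)=\min g=0$, it exhibits actual contact points, proving the tightness $f(x)+g(y)=h(x+y)$ whenever $\partial f(x)\cap\partial g(y)\neq\emptyset$ and one of $f,g$ is differentiable at the relevant point, and then shows that the set $\mathcal{I}$ of such sums $x+y$ is dense in $\R$, so that $H=h$ follows from the continuity of $h$. You instead work entirely on the dual side: via Lemma \ref{deltalem} the claim becomes $h^*=f^*+g^*$; the $h$-conjugacy rewrites (your computation is exactly Lemma \ref{switch} with $N=1$) as $f^*=(h^*-g^*)^{**}$ and $g^*=(h^*-f^*)^{**}$; and the question is whether the continuous gap $\phi=h^*-f^*-g^*\ge 0$ can be positive anywhere. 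Your dichotomy on the connected components of $\{\phi>0\}$ --- bounded components contradict the convexity of $h^*$, unbounded ones contradict the finiteness of $f$ --- is clean, and it isolates precisely where one-dimensionality enters, in a way that is consistent with the counterexamples of Section \ref{sec4} and with the dual reformulation $W=U+V$ of Section \ref{sec5}. Your version never uses the continuity of $h$ itself (only the finiteness of $h^*$, which follows from strong convexity), while the paper's version has the side benefit of exhibiting the dense contact set $\mathcal{I}$ explicitly.

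The one step you assert without justification is the crux of your argument: that the biconjugate $f^*=(h^*-g^*)^{**}$ is affine on every connected component of the open set where it lies strictly below $u:=h^*-g^*$. This is true here, but it is exactly the place where the proof can fail if stated carelessly, so it deserves an argument; it uses that $u$ is continuous and that $u^{**}=f^*$ is finite and differentiable on all of $\R$ (consequences of the strong convexity supplied by Lemma \ref{laconvex}). A short proof: if $f^*$ were not affine on such a component $I$, pick $p<q$ in $I$ with $(f^*)'(p)<s<(f^*)'(q)$ and $r\in(p,q)$ with $(f^*)'(r)=s$, and set $\ell(t)=f^*(r)+s(t-r)$. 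The subgradient inequalities at $p$ and $q$ force $f^*>\ell$ outside $[p,q]$, with $f^*(t)-\ell(t)$ growing at least linearly as $|t|\to\infty$, while on $[p,q]\subset I$ one has $u-\ell\ge u-f^*>0$ with a positive minimum by compactness; hence $\inf_{\R}(u-\ell)>0$, so $\ell+\inf_\R(u-\ell)$ is an affine minorant of $u$ exceeding $\ell(r)=f^*(r)=u^{**}(r)$ at $r$, a contradiction. With that supplied, your proof is complete and is a valid alternative to the one in the paper.
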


\begin{proof} By Lemma \ref{laconvex}, $f$ and $g$ are $\la$- stongly convex. In particular, $\bigcup_{x \in \R} \p f (x) = \bigcup_{y \in \R} \p g (y) = \R$. 
Firstly, we claim that the following set $\cal I$ is dense in $\R$:
\begin{align}\label{claim7}
\cal I &= \{ s \in \R \ | \ s = x+y \text{ such that } \p f(x) \cap \p g(y) \ne \emptyset, \text{ and} \\
&\text{either $f$ is differentiable at $x$ or $g$ is differentiable at $y \}$}. \nn
\end{align}
Let us prove the claim later. Let $H$ denote the $\Delta$-convex envelope of $f \oplus g$. Then $H \ge h$ since $f(x) + g(y) \ge h(x+y)$ by \eqref{conj}. The proposition asserts $H=h$. To prove this, we claim that it is sufficient to prove the following tightness assertion: For any $x_0 ,y_0 \in \R$ such that $\p f(x_0) \cap \p g(y_0) \ne \emptyset$ and either $f$ is differentiable at $x_0$ or $g$ is differentiable at $y_0$, we have
\begin{align}\label{claim3}
f(x_0) + g(y_0) = h(x_0 + y_0).
\end{align}
The sufficiency is because \eqref{claim3} implies $H=h$ on $\cal I$, and thus for any $s \in \R$, by the first claim, there exists a sequence $s_n$ in $\cal I$ such that $\lim s_n = s$, and
\be
H(s) \le \liminf H(s_n) = \liminf h(s_n) = h(s) \nn
\ee
as desired, thanks to the continuity of $h$.

Now to verify \eqref{claim3}, by translation, we may assume without loss of generality that $x_0 = y_0= 0$. Moreover we may assume that $0 \in  \p f(0) \cap \p g(0)$. To see why this can be assumed, let $a \in  \p f(0) \cap \p g(0)$. Consider $\tilde f(x) = f(x) - \langle a, x \rangle -f(0)$, $\tilde g(y) = g(y) - \langle a, y \rangle -g(0)$, and $\tilde h(z) = h(z) - \langle a, z \rangle -f(0) - g(0) $. Then $f$ and $g$ are $h$-conjugate if and only if $\tilde f$ and $\tilde g$ are $\tilde h$-conjugate. And since $\min \tilde f =  \tilde f (0) = 0$ and $\min \tilde g =  \tilde g (0) = 0$,  \eqref{claim3} holds if and only if $\tilde h(0)=0$. Our discussion so far indicates it is sufficient to prove the claim \eqref{claim3} under the assumption $x_0=y_0=0$, $f, g$ are $h$-conjugate, $f(0) = \min f = g(0) = \min g = 0$, and either $f$ or $g$ is differentiable at $0$; and the goal is to prove $h(0)=0$. 

Now to derive a contradiction, suppose $h(0) < 0$, so that $m = \min h < 0$. Assume $f$ is differentiable at $0$  (the proof will be the same in the case $g$ is differentiable at $0$, by switching the role of $f$ and $g$). Let $K = \{ x \ | \ h(x) \le m/2 \}$. Since $h$ is strongly convex, there exists $\de > 0 $ such that 
\be\label{grad}
x \in \R \setminus K \text{ and }  z \in \p h(x) \ \text{ implies } \ |z| \ge \de.
\ee
By \eqref{conj}, given $\ep > 0$, there exists $y_\ep$ such that $-\ep < h(y_\ep) - g(y_\ep) \le 0 = f(0)$. Now $f(x) \ge h(x+ y_\ep) - g(y_\ep)$ for all $x$ and $\nabla f (0) = 0$ imply that for all sufficiently small $\ep$, we must have $y_\ep \in K$ by \eqref{grad}. However, we then have
\be
- \ep < h(y_\ep) - g(y_\ep) \le h(y_\ep) \le m/2, \nn
\ee
a contradiction for small $\ep$, proving the tightness assertion.

It remains to prove $\cal I$ is dense in $\R$. For any $s \in \R$, recall that there exists $x,y \in \R$ such that $s \in \p f (x) \cap \p g(y)$. By translation and subtracting affine functions as before, we may assume $0 \in \p f (0) \cap \p g (0)$. Then notice the desired denseness will follow if we can show that for any $r > 0$, there exists $x, y \in \R$ such that $|x| <r$, $|y|<r$, $\p f(x) \cap \p g(y) \ne \emptyset$, and either $f$ is differentiable at $x$ or $g$ is differentiable at $y$. 

Now to prove the claim, assume that neither $f$ nor $g$ is differentiable at $0$, since otherwise there is nothing to prove. $\p f(0)$ is a compact interval, say $[a,b]$, since $f$ is continuous. And $W:= \p g ( (-r,r))$ is an open interval since $g$ is strongly convex, thus $g^*$ is differentiable and $\p g^* = (\p g)^{-1}$ is continuous.

Suppose $b \in W$. Then for any $\ep >0$, there exists $x \in [0, \ep)$ such that $f$ is differentiable at $x$ and $\nabla f (x) \in [b, b+\ep)$ since $f$ is differentiable a.e.. Thus $\nabla f(x) \in W$ for small $\ep$, and this implies $\nabla f (x) \in \p g(y)$ for some $y \in (-r,r)$, proving the claim. Likewise, the claim holds in the case $a \in W$. Finally, if $\{a,b\} \cap W = \emptyset$, then $W \subset (a,b)$, yielding that $g$ is Lipschitz and differentiable a.e. in $(-r,r)$. Hence there exists $y \in (-r,r)$ such that $\nabla g(y) \in \p f (0)$. This proves the desired denseness of $\cal I$ in $\R$, hence the proposition.
\end{proof}

\begin{remark}\label{remark1}
The above proof shows that Proposition \ref{envelope} will continue to hold for functions $f,g,h$ defined on an arbitrary Hilbert space $\cH$ if the corresponding set $\cal I$ is dense in $\cH$ given the mutual conjugacy \eqref{conj}.
\end{remark}

Now we combine Theorems \ref{mainx} and \ref{mainy} into the following theorem. 
\begin{theorem}\label{main}
Let $\cH = \R$, and let $c: \R^N \to \R$ be given by \eqref{cost}. Suppose $f_i \in {\cal A}(\R)$, $i=1,...,N$, satisfy
\be\label{conjugate}
f_i(x_i) =\Big(\bigoplus_{j\neq i}f_j\Big)^c(x_{i}) \ \text{ for every } \  i=2,...,N.
\ee
Then	 $(f_1,\ldots,f_N)$ is a $c$-conjugate tuple, that is, $f_1$ also satisfies
\be\label{wts}
f_1(x_1) =\Big(\bigoplus_{j\neq 1}f_j\Big)^c(x_{1}).
\ee	
In this case, the contact set $\Gamma = \Gamma_{ \{f_i\}_{i=1}^N}$ satisfies \eqref{maximality}, and thus, the assertions \ref{t:split_max_c-mono-i}--\ref{t:split_max_c-mono-vii} in Theorem \ref{t:split_max_c-mono} hold true if $\cH= \R$.
\end{theorem}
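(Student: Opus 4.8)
The plan is to pass to the \emph{sum picture} via the identity $c_N(\bbx) = q(S(\bbx)) - \sum_{i=1}^N q(x_i)$, where $S(\bbx)=\sum_i x_i$. Writing $\phi_i := f_i + q$, a short computation (first optimizing the decomposition $y=\sum_{j\neq i}x_j$, then over $y$) turns the $c$-conjugacy relation $f_i = (\bigoplus_{j\neq i}f_j)^c$ into
\[
\phi_i(x) = \sup_y\big(q(x+y)-\Psi_i(y)\big) =: Q(\Psi_i)(x),\qquad \Psi_i := \square_{j\neq i}\phi_j .
\]
Since $q(x+y)=q(x)+xy+q(y)$ one has $Q(\psi)=q+(\psi-q)^*$, and $\Psi_i^* = \sum_{j\neq i}\phi_j^* = \sum_{j\neq i}e_{f_j^*}$. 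From this, maximality \eqref{maximality} is equivalent to $\square_{i=1}^N\phi_i = q$, equivalently $\sum_{i=1}^N\phi_i^* = \sum_{i=1}^N e_{f_i^*} = q$, i.e.\ to assertion \ref{t:split_max_c-mono-v} (one uses that the relevant infima are attained by strong convexity). Crucially, \emph{once maximality is known, \eqref{wts} is automatic}: by Moreau's decomposition $\Psi_1^* = q - e_{f_1^*} = e_{f_1}$, so $\Psi_1 = q+f_1^*$, whence $Q(\Psi_1)=q+f_1^{**}=\phi_1$. Thus the whole theorem reduces to deriving $\square_{i=1}^N\phi_i=q$ from the $N-1$ relations $\phi_i = Q(\Psi_i)$, $i=2,\dots,N$.

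I would prove this by induction on $N$. The base case $N=2$ is classical: $\phi_2=Q(\phi_1)=q+f_1^*$ gives $Q(\phi_2)=q+f_1^{**}=\phi_1$ (Fenchel--Moreau) and $\phi_1^*+\phi_2^*=e_{f_1^*}+e_{f_1}=q$ (Moreau). For the inductive step I merge the last two marginals, replacing $(\phi_{N-1},\phi_N)$ by $\phi_* := \phi_{N-1}\square\phi_N$ and keeping $\phi_1,\dots,\phi_{N-2}$ fixed. By associativity of $\square$, the quantities $\Psi_i$ for $i\le N-2$ and the total inf-convolution $\square_i\phi_i$ are unchanged, so the relations $\phi_i=Q(\Psi_i)$ for $2\le i\le N-2$ survive verbatim and the reduced maximality coincides with the original one. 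It therefore suffices to produce the one missing relation for the merged index, $\phi_* = Q(A)$ with $A:=\square_{j\le N-2}\phi_j$, and then invoke the induction hypothesis for the $(N-1)$-tuple $(\phi_1,\dots,\phi_{N-2},\phi_*)$. Note that $\phi_*$ need not be of the form (convex)$+q$, so the induction must be phrased for the $Q$-conjugacy statement among suitable strongly convex data in the sum picture, with the distinguished first slot retained as $\phi_1=f_1+q$, $f_1\in{\cal A}(\R)$, so that the base case remains available.

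Establishing $\phi_{N-1}\square\phi_N = Q(A)$ is the heart of the matter and the step I expect to be the main obstacle. The key observation is that, after expanding $Q$ and $\square$ and partially maximizing, the two given relations $\phi_{N-1}=Q(\phi_N\square A)$ and $\phi_N=Q(\phi_{N-1}\square A)$ say \emph{exactly} that $\phi_{N-1}$ and $\phi_N$ are $h$-conjugate in the sense of \eqref{conj} with $h:=Q(A)$. Proposition \ref{envelope} then yields that $h$ is the $\Delta$-convex envelope of $\phi_{N-1}\oplus\phi_N$, and Lemma \ref{deltalem} translates this into $h^*=\phi_{N-1}^*+\phi_N^*$, i.e.\ $\phi_{N-1}\square\phi_N=h=Q(A)$, as desired. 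The delicate part is the strong-convexity bookkeeping needed to legitimately apply Proposition \ref{envelope}: one must verify that the conjugacy relations force every $f_i$ (hence $\phi_i$) to be finite, continuous and in fact strongly convex --- via Lemma \ref{laconvex} together with the fact that $Q(\psi)$ is finite only when $\psi$ is more than $1$-strongly convex --- and that these properties are inherited by $A$ and by $h=Q(A)$, so that $h$ is genuinely $\lambda$-strongly convex and continuous. Granting this bookkeeping the induction closes, maximality follows, \eqref{wts} follows as in the first paragraph, and the equivalence with assertions \ref{t:split_max_c-mono-i}--\ref{t:split_max_c-mono-vii} is Theorem \ref{t:split_max_c-mono}. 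The one-dimensionality enters only through Proposition \ref{envelope} (the density of its contact set $\cal I$), which is precisely what fails in higher dimensions.
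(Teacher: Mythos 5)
Your reformulation in the ``sum picture'' ($\phi_i=f_i+q$, $\phi_i=Q(\Psi_i)$ with $\Psi_i=\square_{j\ne i}\phi_j$, maximality $\Leftrightarrow\square_i\phi_i=q\Leftrightarrow$ assertion \ref{t:split_max_c-mono-v}, and \eqref{wts} following from maximality via Moreau) is correct and is essentially the dual perspective the paper itself records in Section \ref{sec5}. Your induction scheme --- merge the last two slots into $\phi_*=\phi_{N-1}\square\phi_N$, observe that the two given relations say exactly that $\phi_{N-1},\phi_N$ are $Q(A)$-conjugate with $A=\square_{j\le N-2}\phi_j$, and invoke Proposition \ref{envelope} plus Lemma \ref{deltalem} to get $\phi_{N-1}\square\phi_N=Q(A)$ --- is a legitimate variant of the paper's induction (which instead merges $x_3,\dots,x_N$ through $g=(f_1\oplus f_2)^c$ and the $\Delta$-convex envelope of $\psi$); both ultimately rest on Proposition \ref{envelope}, and you correctly identify that one-dimensionality enters only there.

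The genuine gap is at the point you label ``bookkeeping.'' It is not true that the conjugacy relations force every $f_i$ (hence $\phi_i$, $A$, and $h=Q(A)$) to be finite and continuous. The hypotheses of the theorem place no constraint of this kind on $f_1$ in particular: e.g.\ the triple in which $f_1$ equals $0$ at the origin and $+\infty$ elsewhere, with $f_2=f_3=q$, satisfies \eqref{conjugate} for $i=2,3$; and nothing prevents $f_1$ from being Lipschitz, in which case already for $N=3$ one has $h=Q(\phi_1)=q+f_1^*$ with $\dom f_1^*$ bounded, so $h$ is $+\infty$ off a compact interval. Proposition \ref{envelope} then simply does not apply: its proof uses continuity of $h$ on all of $\R$ to pass from the dense contact set $\cal I$ to every point, and continuity of $f$ to get compactness of $\p f(0)$. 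This is exactly where the paper does its real work: Step 1 of the proof introduces the truncations $G_R$, $h_R^*$, $H_R$, $F_R$, $F_R^H$, $K_R$ so that Proposition \ref{envelope} is applied only to everywhere-finite, locally Lipschitz data, and then recovers the untruncated statement through the nontrivial limits $\min F_R\nearrow\min F$ and $\min K_R\nearrow\min G$ (proved via $F_R^*\searrow F^*$ and a local Lipschitz argument). Your proposal contains no substitute for this limiting argument, so the central step of the induction is not established. A secondary, fixable issue you already flag yourself: after one merge $\phi_*$ is in general only $\tfrac12$-strongly convex, so the induction statement must be set up for a class strictly larger than $\{f+q:f\in{\cal A}(\R)\}$ while keeping the first slot of that form (the statement is false if $\phi_1$ is allowed to be, say, the indicator of an interval), and the base case and the applicability of Lemma \ref{laconvex} must be re-verified in that class.
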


\begin{proof} If $N=2$, \eqref{wts} is equivalent to the fact $f^{**} = f$ by Fenchel and Moreau, and \eqref{maximality} is also shown by Rockafellar and Minty. We will proceed by an induction on $N \ge 3$, thus when $\cH = \R$, firstly we will extend these results \eqref{wts}, \eqref{maximality} for $N=3$. However, since the proof presented below will be valid for an arbitrary Hilbert space $\cH$, in the sequel, we will denote $\cH$ (rather than $\R$) by the underlying space, though we will eventually assume $\cH = \R$.
\\

\noin{\bf Step 1: $N=3$ case.} To begin, let us write $(f,g,h) = (f_1,f_2,f_3)$. We have \[ f(x) + g(y) + h(z) \ge \xy + \yz + \zx,
\]
 hence
\be\label{relation1}
 \vp(x,y) := f(x) + g(y) - \langle x,y \rangle \ge h^*(x+y). 
\ee
We will show $h^*$ is the $\Delta$-convex envelope of $\vp$. Notice \eqref{relation1} is equivalent to
\be\label{relation2}
F(x) + G(y) \ge H(x + y)
\ee
where $F= f + q$, $G = g + q$, $H = h^* + q$, and we recall $q(s) = \frac12 |s|^2$. Observe that the conjugacy assumption \eqref{conjugate}, which reads 
\begin{align}
f(x) = \sup_y \{h^*(x+y) + \xy - g(y) \}, \ g(y) = \sup_x \{h^*(x+y) + \xy - f(x) \} \nn
\end{align}
implies that $F$ and $G$ are also $H$-conjugate (see \eqref{conj}).
We claim that $H$ is the $\Delta$-convex envelope of $F \oplus G$. Observe that this implies \eqref{wts} for $N=3$, because if $H$ is the $\Delta$-convex envelope of $F \oplus G$, then $h^*$ must be the $\Delta$-convex envelope of $\vp$ due to the equivalence between \eqref{relation1} and  \eqref{relation2}. But this precisely means that $h = (f \oplus g)^c$ by Lemma \ref{deltalem}, as desired.

Then as argued in the proof of Proposition \ref{envelope}, it is sufficient to prove the claim under the assumption that $F$ and $G$ are $H$-conjugate, $F(0) = 0 = \min F$, $G(0) = 0 = \min G$, and the goal is to show $H(0)=0 = \min H$. 

To this end, for each $R > 0$, define
\begin{align}
G_R(y) &= G(y) \ \text{ if } \ |y| \le R, \q G_R(y) = +\infty \ \text{ if } \ |y| > R, \nn \\
h^*_R(x) &= \sup_{|y| \le R} \xy - h(y), \q H_R = h^*_R + q, \nn \\
F_R(x) &= \sup_{y} \{ H_R(x+y) - G_R(y)\}, \nn \\
F_R^H(y) &= \sup_x \{ H_R(x+y) - F_R(x)\}, \nn \\
K_R(y) &= \sup_x \{ H_R(x+y) - F(x)\}. \nn
\end{align}
Assume $R$ is large enough so that $\min G_R = \min G$ and $h_R^*$ is proper. We have
\begin{align}
&h_R^* \text{ has Lipschitz constant at most $R$,} \label{fact2} \\
&F_R \text{ is locally Lipschitz and monotone increasing to } F \text{ as } R \to \infty, \label{fact3}\\
&F_R^H  \text{ converges pointwise to } G. \label{fact5}
\end{align}
\eqref{fact3} is because $H_R$ and $-G_R$ monotonically increase in $R$, and $F_R$ is locally Lipschitz because the supremum defining $F_R$ is taken over $|y| \le R$ only. And \eqref{fact5} is because $G_R \ge F_R^H \ge K_R$, $G_R$ decreases to $G$, and $K_R$ increases to $G$ in $R$. 
Now since $F_R$ and $F_R^H$ are $H_R$-conjugate, by Proposition \ref{envelope}, we have
\be\label{fact6}
\text{$H_R(x+y)$ is the $\Delta$-convex envelope of $F_R(x) + F_R^H(y)$.}
\ee
Recall that our goal is to show $ \min H = \min F + 
\min G$. It is clear that $ \min H \le \min F + 
\min G$. To show the reverse, since $H_R \le H$  and $\min H_R = \min F_R + \min F_R^H$ by \eqref{fact6}, and also $G_R \ge F_R^H \ge K_R$, it is enough to show
\be\label{claim2}
\min F_R \nearrow \min F \ \text{ and } \  \min K_R \nearrow \min G   \ \text{ as } \, R \to \infty.
\ee
The fact that $\min F = F(0) = 0$ and that $F$ is of the form $F = f + q$ implies $F \ge q$, thus $F^* \le q$. Since $F_R$ is increasing to $F$, $F_R^*$ is decreasing in $R$ and is bounded below by $F^*$. Let $F_\infty$ denote the limit of $F_R^*$. We claim  $F_\infty = F^*$. To see this, first notice $F_\infty$ is convex as a limit of convex functions. Secondly we have $F_\infty$ is continuous, because $F^* \le F_\infty \le F_R^*$ and $F_R$ is $1$-strongly convex by Lemma \ref{laconvex}, thus $q - F_R^*$ is  continuous and convex. This implies $F_\infty$ is bounded on every bounded subset of $\cH$, thus is locally Lipschitz, hence continuous on $\cH$. This yields $F_\infty = (F_\infty)^{**} = F^*$ as claimed. This implies in particular,
\[
\min F = - F^*(0) = - F_\infty(0) = - \lim_{R \to \infty} F_R^*(0) =  \lim_{R \to \infty} \min F_R.
\]
Similarly, $\min K_R \nearrow \min G$. This proves $\min H = H(0)=0$, and thus \eqref{wts}.

Now to prove \eqref{maximality}, fix any $s \in \cH$. By \eqref{maximality} holding for $N=2$, there exists $z \in \cH$ such that $h(z) + h^*(s-z) = \langle z, s - z \rangle$. This yields $ z \in \p h^*(s-z)$, implying $s \in \p H(s-z)$. Since $F, G, H$ are strongly convex, there exist unique $x, y, u \in \cH$ such that $s \in \p F(x) \cap \p G(y) \cap \p H( u)$. The fact that $H$ is the $\Delta$-convex envelope of $F \oplus G$ now yields $F(x) + G(y) = H(x+y)$ and $ s \in \p H(x+y)$. Then the uniqueness of $u$ implies $s-z = x+y$, that is, $ s = x+y+z$. Finally, 
\begin{align*}
&F(x) + G(y) = H(x+y) \\
\iff &f(x) + g(y) = h^*(x+y) + \xy \\
\iff &f(x) + g(y) = h^*(s-z) + \xy \\
\iff &f(x) + g(y) = \langle z, s - z \rangle -h(z) + \xy \\
\iff &f(x) + g(y) + h(z) = \xy + \yz + \zx.
\end{align*}
This proves the maximality \eqref{maximality}, hence the theorem for $N=3$. 
\\

\noin{\bf Step 2: induction on $N$.} From now on, we extend the proof for $N \ge 4$. We proceed by an induction on $N$. Suppose the theorem holds for $N-1$. Define
\begin{align*}
\tilde \bbx &= (x_3,...,x_N), \q S( \tilde \bbx) = \sum_{i=3}^{N} x_i, \q c(\tilde \bbx ) = \sum_{3 \le i < j \le N} \xx, \\
\psi(\tilde \bbx) &= \sum_{i=3}^{N} f_i(x_i) - c(\tilde \bbx ), \q \vp(x_1, x_2) = f_1(x_1) + f_2(x_2) - \langle x_1, x_2 \rangle,\\
g &= (f_1 \oplus f_2)^c,  \text{ i.e., }  g(y) = \sup_{x_1, x_2} \big\{ \langle y, x_1 + x_2 \rangle - \vp (x_1, x_2) \big \}.
\end{align*} 

From the inequality
$\sum_{i=1}^N f_i(x_i) \ge c(\bbx)$, we have 
\begin{align}\label{ineq0}
\psi(\tilde \bbx) &\ge g\big(S( \tilde \bbx)\big) = \sup_{x_1, x_2} \big\{ \langle S( \tilde \bbx), x_1 + x_2 \rangle - \vp (x_1, x_2) \big \}.
\end{align}
Now comes the crux of the observation:  \eqref{ineq0}, the induction hypothesis (on the cyclical involutivity), and the conjugacy \eqref{conjugate} (i.e., each of the $f_3,...,f_N$ is the smallest convex function satisfying \eqref{ineq0} given others) combine to imply
\be
(f_3,...,f_N, g^*) \text{ are } c(\tilde \bbx, y)-\text{conjugate, where } c(\tilde \bbx, y)= c(\tilde \bbx) + \langle S( \tilde \bbx), y \rangle. \nn
\ee
In other words, $g\big(S( \tilde \bbx)\big)$ is the $\Delta$-convex envelope of $\psi(\tilde \bbx)$, by Lemma \ref{deltalem}. 

This in turn implies $f_2 = (f_1 \oplus g)^c$ due to the following calculation:
\begin{align*}
f_2(x_2) &=\Big(\bigoplus_{j\neq 2}f_j\Big)^c(x_2) \\
&= \sup_{x_1, \tilde \bbx} \big\{  \langle x_1,x_2 \rangle  + \langle x_1+x_2,  S(\tilde \bbx) \rangle - f_1(x_1) - \psi(\tilde \bbx) \big\} \\
&= \sup_{x_1} \big\{  \langle x_1,x_2 \rangle - f_1(x_1) + \sup_{\tilde \bbx}  \{ \langle x_1+x_2,  S(\tilde \bbx) \rangle - \psi(\tilde \bbx)\} \big\} \\
&= \sup_{x_1, y} \big\{  \langle x_1,x_2 \rangle   - f_1(x_1) + \langle x_1+x_2,  y \rangle -g(y) \big\} \\
&= \sup_{x_1, y} \big\{  \langle x_2,x_1 + y \rangle  -  (f_1(x_1) +g(y) - \langle x_1, y \rangle ) \big\} \\
& = (f_1 \oplus g)^c (x_2)
\end{align*}
where the fourth equality is because $g$ is the $\Delta$-convex envelope of $\psi$. The induction hypothesis (or the theorem we established for $N=3$) now implies 
\begin{align*}
f_1 = (f_2 \oplus g)^c = \Big(\bigoplus_{j\neq 1}f_j\Big)^c(x_1)
\end{align*}
where the second equality is by an analogous calculation given above. This completes the proof of the cyclical involutivity \eqref{wts}.

Finally, we establish the maximality \eqref{maximality}. Recall \eqref{ineq0}, that is
\be
\psi( \tilde \bbx) \ge g ( S( \tilde \bbx) ) \ge \langle y, S( \tilde \bbx) \rangle - g^*(y) \ \text{ for every }  x_3,...,x_N,y. \nn
\ee
Fix any $s \in \cH$.  By the conjugacy of $(f_3,...,f_N, g^*)$ and the induction hypothesis on the maximality, there exists $\tilde \bbx^s = ( \tilde x^s_3,..., \tilde x^s_N)$ such that
\be\label{eq0}
\psi( \tilde \bbx^s) = g ( S( \tilde \bbx^s) ) = \langle s - S( \tilde \bbx^s), S( \tilde \bbx^s) \rangle - g^*(s - S( \tilde \bbx^s)).
\ee
Similarly, the conjugacy of $(f_1,f_2, g)$ yields 
\be
\vp(x_1,x_2) \ge g^*(x_1+x_2) \ge \langle z, x_1+x_2 \rangle - g(z) \ \text{ for every }  x_1,x_2, z, \nn
\ee
and for the same $s$, there exists $x^s_1, x^s_2 \in \cH$ such that
\be\label{eq2}
\vp(x^s_1,x^s_2) = g^*(x^s_1+x^s_2) = \langle  s - x^s_1 - x^s_2, x^s_1+x^s_2 \rangle - g(s - x^s_1 - x^s_2).
\ee
However, the pair $u,v \in \cH$ that satisfies $u+v = s$ and $g(u) + g^*(v) = \langle u,v \rangle$ is unique. Hence \eqref{eq0}, \eqref{eq2} implies $S( \tilde \bbx^s) = s - x^s_1 - x^s_2$, or $s = \sum_{i=1}^N x^s_i$. From this identity, by adding the identities \eqref{eq0}, \eqref{eq2}, we obtain
\begin{align*}
&\vp(x^s_1,x^s_2) + \psi( \tilde \bbx^s) =  \langle  S( \tilde \bbx^s), x^s_1+x^s_2 \rangle \nn \\
\iff &(x^s_1,...,x^s_N) \in \Gamma = \bigg\{\bbx \ \bigg| \ \sum_{j=1}^N f_j (x_j) = c(\bbx) \bigg\}.
\end{align*}
This completes the proof of the theorem.
\end{proof}

\begin{remark}\label{remark2}
The proof shows that Theorem \ref{main} holds for any Hilbert space $\cH$ and for all $N \ge 3$ as soon as it holds for $N=3$. However, the case $N=3$ necessitates the restriction $\cH = \R$ for the theorem to hold unless some regularity assumption is imposed on the conjugate convex functions. In view of Remark \ref{remark1}, this implies the set ${\cal I}$ is not dense in $\cH$ in general when $\cH$ is multi-dimensional. In the following section, we provide examples for illustration.
\end{remark}

\section{Failure on multidimensional domain: Examples on the plane}\label{sec4}
Recall the notion of $c$-cyclical monotonicity. Let $e_1 = (1,0)$, $e_2 = (0,1)$ denote the standard basis of $\R^2$. We now provide details for  Proposition \ref{counter1} in the following example.

\begin{example}\label{counter}
Let $\la > 0$, and $u =(1,0) \in \R^2$, $v = (\tfrac12, \tfrac{\sqrt{3}}2) \in \R^2$. There exists a $c$-conjugate triple $f,g,h \in {\cal A}(\R^2)$  for which $\Gamma = \Gamma_{\{f,g,h\}}$ satisfies
\be\label{SGamma}
S(\Gamma) = \R^2 \setminus {\rm int}\, {\mathbf H}_\la 
\ee
where ${\mathbf H}_\la$ is the convex hull of its six vertices $\pm 2\la u$, $\pm 2\la v$, and $\pm 2\la (v - u)$. Furthermore, $\Gamma$ is not maximally $c$-monotone. 

An example of such a $c$-conjugate triple is as follows:
\begin{align}\label{triple}
f(x) &= \begin{cases}
0 \, \text{ on }\,  I_1 := \{ x = au \ | \ a \in [-\la,\la]\}, \\
\infty  \, \text{ else,} \nn
\end{cases}
\\
g(y) &= \begin{cases}
0 \, \text{ on }\,  I_2 := \{ y = bv \ | \ b \in [-\la,\la]\}, \\
\infty  \, \text{ else,} \nn
\end{cases}
\\
h(z) &= \max \big( \la | u \cdot z + \la u \cdot v | + \la v \cdot z, \, \la | u \cdot z -\la u \cdot v | - \la v \cdot z \big). \nn
\end{align}
\end{example} 

\begin{proof}
{\bf Step 1: $h = (f \oplus g)^c$.} We firstly show $h = (f \oplus g)^c$. This follows from the following straightforward calculation:
\begin{align*}
(f \oplus g)^c (z) &= \sup_{x,y \in \R^2} x \cdot y + y \cdot z + z \cdot x - f(x) - g(y) \\
&= \sup_{|a| \le \la, |b| \le \la} (au + bv) \cdot z + ab (u \cdot v) \\
&= \sup_{|b| \le \la} \sup_{|a| \le \la} a(u \cdot z + b(u\cdot v)) + b(v \cdot z) \\
&=  \sup_{|b| \le \la} \la | u \cdot z + b(u\cdot v) | + b (v \cdot z) \\
&= \max \big( \la | u \cdot z + \la u \cdot v | + \la v \cdot z, \, \la | u \cdot z -\la u \cdot v | - \la v \cdot z \big)
\end{align*}
where the last equality is because the function $b \mapsto \la | u \cdot z + b(u\cdot v) | + b (v \cdot z) $ is convex and thus its maximum is attained at the boundary $b=\la$ or $b=-\la$.
\\

\noin{\bf Step 2: geometry of $h$ and $h^*$.} We investigate the geometry of $h$ and thereby derive $h^*$. Recall $u =e_1$, $v = \tfrac12 e_1 + \tfrac{\sqrt{3}}2 e_2$. Thus, for $z = (z_1,z_2) \in \R^2$,
\be
h(z) = \max \big( \la\big|z_1 + \tfrac12\la \big| + \tfrac12 \la z_1 + \tfrac{\sqrt3}2 \la z_2, \, \la\big|z_1 - \tfrac12 \la \big| - \tfrac12 \la z_1 - \tfrac{\sqrt3}2 \la z_2 \big). \nn
\ee
The solution of the equation
\be
\big|z_1 + \tfrac12 \la \big| + \tfrac12 z_1 + \tfrac{\sqrt3}2 z_2 = \big|z_1 - \tfrac12 \la \big| - \tfrac12 z_1 - \tfrac{\sqrt3}2 z_2 
\nn
\ee
is given by
\begin{align*}
z_2 &= \tfrac1{\sqrt3}\big( \big| z_1 - \tfrac12 \la \big| - \big| z_1 + \tfrac12 \la \big| - z_1 \big) \\
&= \begin{cases}
-\sqrt{3}z_1 \, \text{ if } \, z_1 \in [-\frac12 \la, \frac12 \la], \\
-\frac{1}{\sqrt{3}}z_1 -\frac{1}{\sqrt{3}}\la \, \text{ if } \, z_1 \in [ \frac12 \la, +\infty), \\
-\frac{1}{\sqrt{3}} z_1 + \frac{1}{\sqrt{3}}\la \, \text{ if } \, z_1 \in ( -\infty, -\frac12 \la].
\end{cases}
\end{align*}
This, and the presence of the terms $\big|z_1 + \frac12 \la \big|$ and $\big|z_1 - \frac12 \la \big|$ imply that $h$ is a piecewise affine convex function on the following four closed convex regions
\begin{align*}
R_1 &= \{ z_1 \ge \tfrac12 \la\} \cap \{ z_2 \le -\tfrac{1}{\sqrt{3}}z_1 -\tfrac{1}{\sqrt{3}} \la \},\\
R_2 &= \{ z_1 \le - \tfrac12 \la\} \cap \{ z_2 \ge -\tfrac{1}{\sqrt{3}} z_1 + \tfrac{1}{\sqrt{3}} \la \},\\
R_3 &= \{ z_2 \ge -\sqrt3 z_1\} \setminus ( {\rm int} R_1 \cup {\rm int} R_2), \\
R_4 &= \{ z_2 \le -\sqrt3 z_1\} \setminus ( {\rm int} R_1 \cup {\rm int} R_2),
\end{align*}
where int$R$ denotes the interior of $R$, and $h$ is defined on each region as
\be\label{h}
h(z) = \begin{cases}
\tfrac{1}{2} \la z_1 - \tfrac{\sqrt3}{2} \la z_2 - \tfrac12 \la^2 \, \text{ in } R_1,\\
-\tfrac{1}{2} \la z_1 + \tfrac{\sqrt3}{2} \la z_2 - \tfrac12 \la^2 \, \text{ in } R_2, \\
 \tfrac{3}{2} \la z_1 +  \tfrac{\sqrt3}{2}\la  z_2 + \tfrac12 \la^2 \, \text{ in } R_3, \\
  -\tfrac{3}{2} \la z_1 -\tfrac{\sqrt3}{2} \la z_2 + \tfrac12 \la^2  \, \text{ in } R_4.
\end{cases}
\ee
See Figure \ref{figure1}. In particular, $\min h = h(0) = \tfrac12 \la^2$. Let $w = v - u =  -\tfrac12 e_1 + \tfrac{\sqrt3}{2} e_2$, and let $D_\la \subset \R^2$ denote the convex hull of its  four vertices $\pm \la w$, $\pm \la (u+v)$. Observe \eqref{h} readily implies that its convex conjugate is the following: 
\be\label{h*}
h^*(z^*) = \begin{cases}
\la^2( t-\tfrac{1}{2} ) \, \text{ if } \, z^* = \pm t \la w + s \la (u +v) \text{ for }  t \in [0,1], s \in [t-1, 1-t]
\\
+\infty \, \text{ else, i.e., if } \, z^* \notin D_\la.
\end{cases}
\ee

\begin{figure}
    \begin{subfigure}[b]{0.49\columnwidth}
        \includegraphics[width=\textwidth]{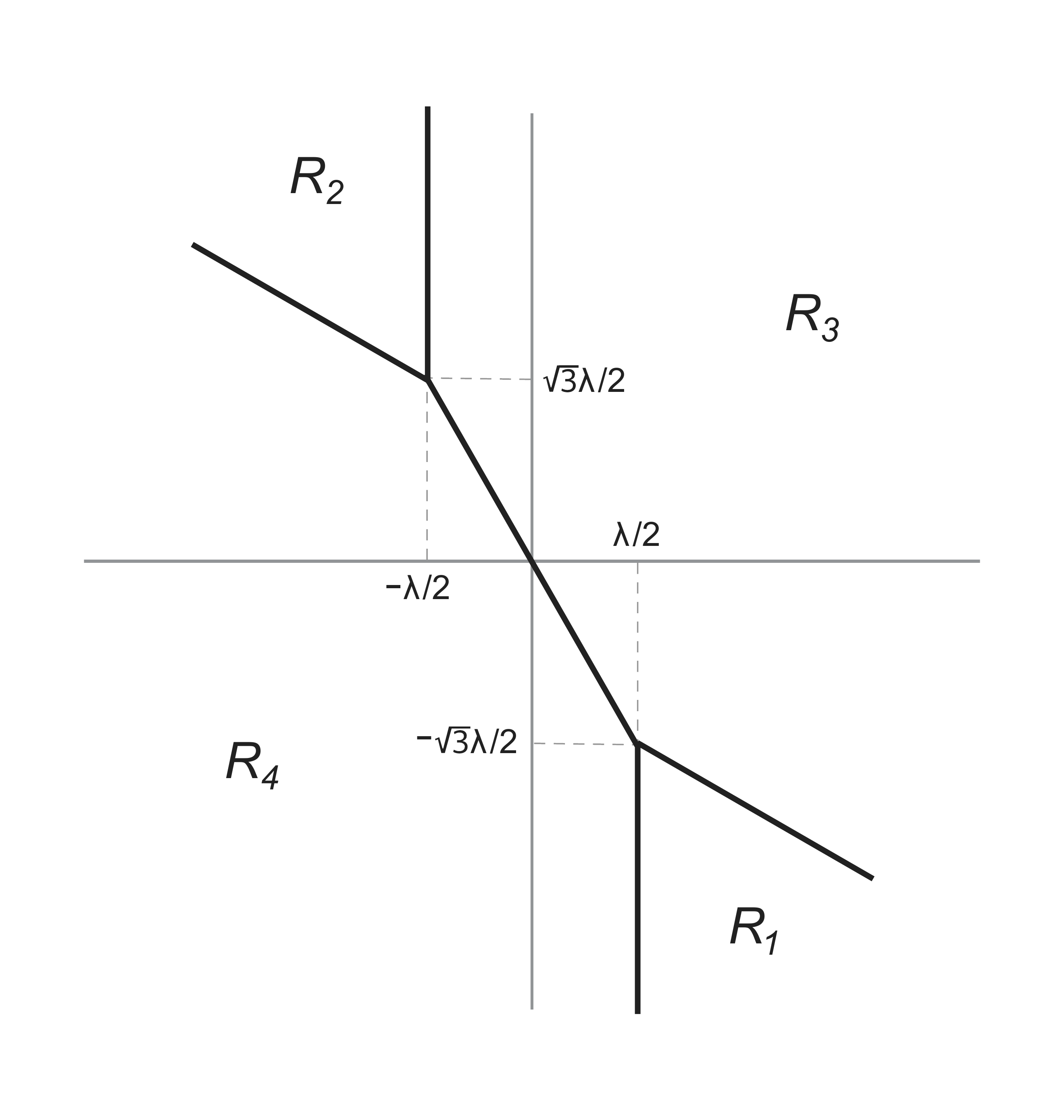}
      \caption{$h$ is piecewise affine and convex.}
    \end{subfigure}
    \hfill
    \begin{subfigure}[b]{0.49\columnwidth}
        \includegraphics[width=\textwidth]{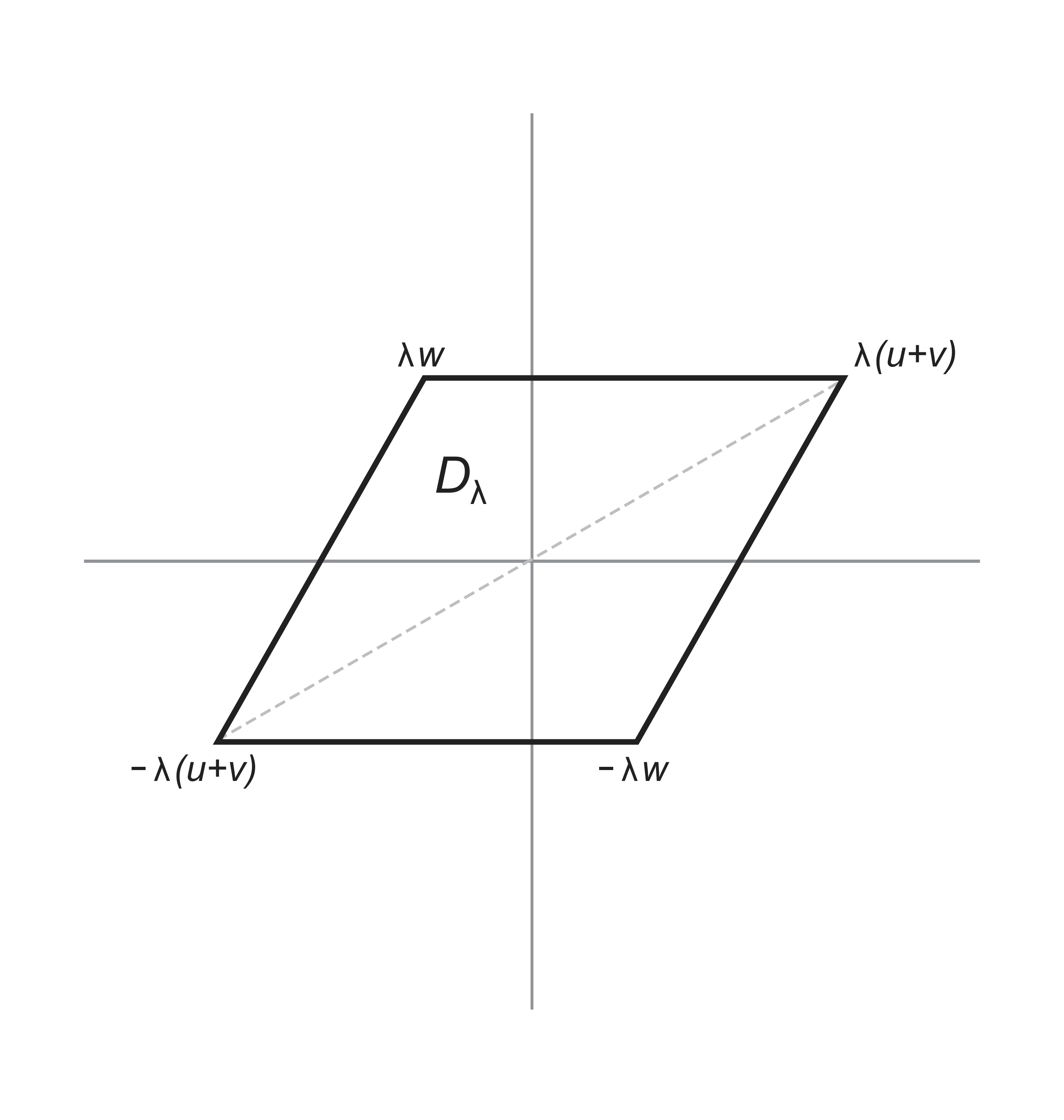}
      \caption{In $D_\la$, $h^*$ is constant along each line segment parallel to the dashed line.}
    \end{subfigure}
    \caption{Geometry of $h$ and $h^*$}
  \label{figure1}
\end{figure}

\noin{\bf Step 3: $g = (h \oplus f)^c$ and $f = (h \oplus g)^c$.} Now we show $g = (h \oplus f)^c$. By symmetry, $f = (h \oplus g)^c$ will then follow verbatim and we omit. We compute
\begin{align}
(h \oplus f)^c (y) &= \sup_{x,z \in \R^2} x \cdot y + y \cdot z + z \cdot x - f(x) - h(z) \nn \\
&= \sup_{z \in \R^2} \sup_{a \in [-\la,\la]} a(y_1+z_1) + y \cdot z -h(z) \nn \\
&= \sup_{z \in \R^2} y \cdot z - \big( h(z) - \la |z_1 + y_1| \big) \nn \\
&= (\xi_{y_1})^* (y) \nn
\end{align}
where $y= (y_1,y_2)$, $z=(z_1,z_2)$ and $\xi_{y_1} (z) := h(z) - \la |z_1 + y_1|$. We need to compute $(\xi_{y_1})^*$, the convex conjugate of $\xi_{y_1}$ for each $y_1$, and evaluate at $y$. First, since the coefficient of $z_2$ in \eqref{h} is within $[-\tfrac{\sqrt3 \la}{2}, \tfrac{\sqrt3 \la}{2}]$, we readily get
\be
(\xi_{y_1})^* (y) = +\infty \, \text{ if } \, |y_2| > \tfrac{\sqrt3 \la}{2}.\nn
\ee
As a result, we will assume $y_2 \in [-\tfrac{\sqrt{3} \la}2, \tfrac{\sqrt{3} \la}2]$ from now on. We claim that $(\xi_{y_1})^* (y) = +\infty$ if $y$ is not parallel to $v$. To see this, we will show that $\xi_{y_1}$ is constant on the half-infinite lines $L_1 := R_1 \cap R_3$ and $L_2 := R_2 \cap R_4$ respectively. Notice the claim then follows from this constancy and the fact that $v$ is perpendicular to the lines $L_1, L_2$. Now it is clear that $\xi_{y_1}$ is constant on $L_1$, since for $ z \in L_1$, we have $z_2 = -\tfrac1{\sqrt3} z_1  -\tfrac1{\sqrt3}\la$, yielding $h(z) =\tfrac{\la}{2} z_1 - \tfrac{\sqrt3 \la}{2} z_2 - \tfrac{\la^2}2 = \la z_1$. Similarly, $h(z) = -\la z_1$ on $L_2$, proving the claim. 

The remaining case for calculating $(\xi_{y_1})^* (y)$ is when $y$ is parallel to $v$, that is, $y_2 = \sqrt3 y_1$ and $y_1 \in [-\tfrac{\la}2, \tfrac{\la}2]$. We need to show $(\xi_{y_1})^* (y) = 0$, which will follow if we can show that the function $l_y (z) := y \cdot z$ satisfies $l_y \le \xi_{y_1}$ in $\R^2$ with the inequality being saturated at some point in $\R^2$. In fact, we claim
\begin{align}
&l_y = \xi_{y_1} \text{ on } L_1 \cup L_2, \, \text{ and } \, l_y \le \xi_{y_1} \text{ on } L_3 \cup L_4 \cup L_5 \cup L_6, \text{ where}  \nn \\
&L_3 = R_1 \cap R_4, L_4 = R_2 \cap R_3, L_5 = R_3 \cap R_4, L_6 = \{z \ | \ z_1 = -y_1\}. \nn
\end{align}
Notice that since $\xi_{y_1}$ is piecewise affine, the claim implies $l_y \le \xi_{y_1}$ as desired.

\begin{figure}
    \centering 
    \begin{subfigure}[H]{0.49\columnwidth}
        \includegraphics[width=\textwidth]{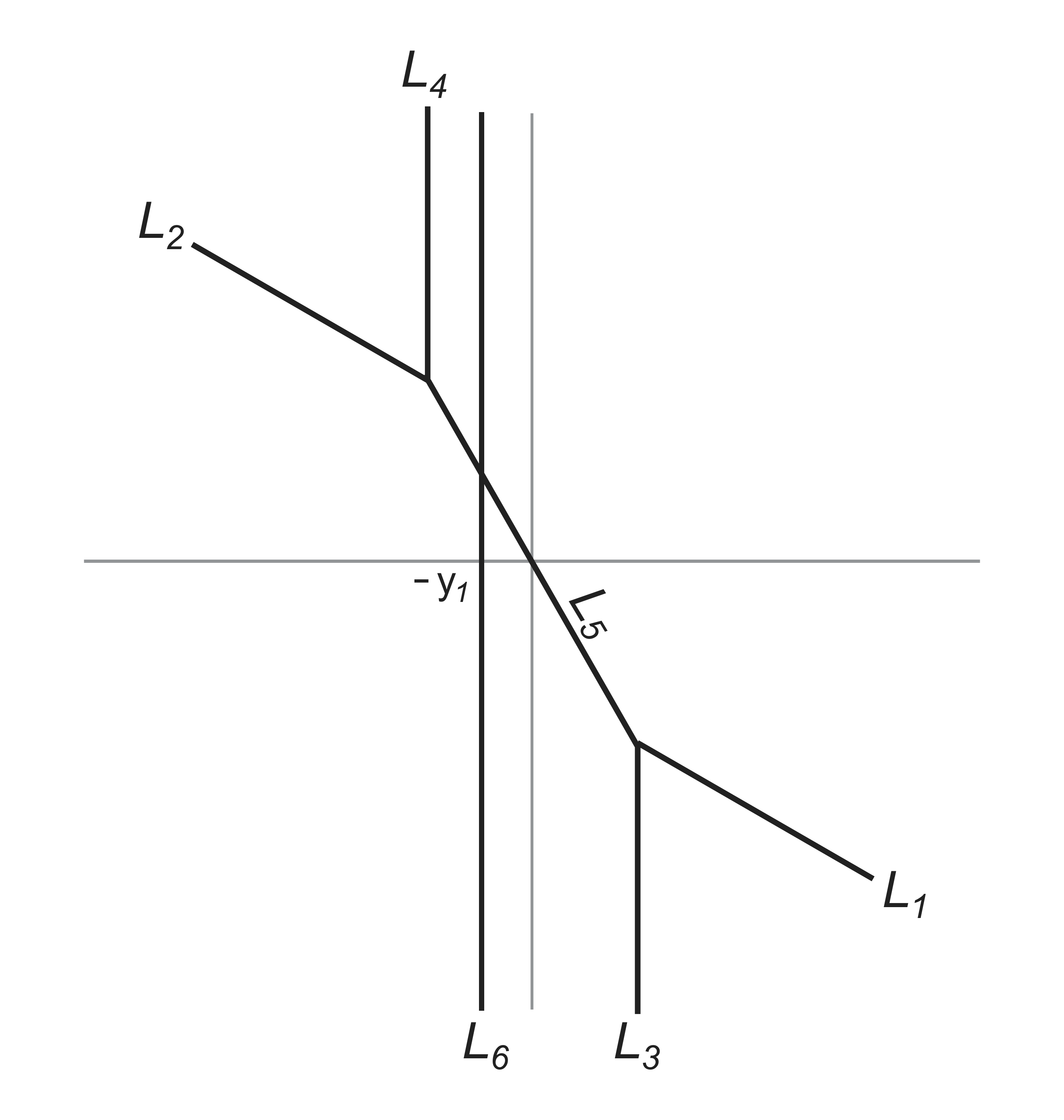}
    \end{subfigure}
    \caption{$l_y \le \xi_{y_1}$ along each line segment $L_i$, thus in $\R^2$}
  \label{figure2}
\end{figure}

To see $l_y = \xi_{y_1}$ on $L_1$, since $l_y$ is constant on $L_1$, it is enough to show $l_y (-\la w) = \xi_{y_1}(-\la w)$. But $\xi_{y_1}(-\la w) = h(- \la w) - \la | y_1 - \la w_1 | =\tfrac{\la^2}2 - \la(y_1 + \tfrac{\la}2) =  - \la y_1 = \la(\tfrac12, - \tfrac{\sqrt3}{2}) \cdot (y_1, \sqrt3 y_1) = -\la w \cdot y$, as desired. Similarly, $l_y = \xi_{y_1}$ on $L_2$. Then $l_y \le \xi_{y_1}$ on $L_4$ is immediate from the fact that $L_4$ is vertical, and the coefficient of $z_2$ in \eqref{h} is $ \tfrac{\sqrt3 \la}{2}$ which is no less than $y_2$, meaning that $\xi_{y_1}$ grows no slower than $l_y$ along $L_4$. Similarly $l_y \le \xi_{y_1}$ on $L_3$. Then $l_y \le \xi_{y_1}$ on $L_5$ follows from the fact that $\xi_{y_1}$ is concave on $L_5$ with $l_y = \xi_{y_1}$ at the boundary points of $L_5$, i.e., at $\la w$ and $-\la w$. From this, the dominance $l_y \le \xi_{y_1}$ on $L_6$ can also be seen from the growth of $h$ in $z_2$ direction, along with the fact $l_y \le \xi_{y_1}$ at the point $L_5 \cap L_6$ just established. This proves the conjugacy $g = (h \oplus f)^c$.
\\

\noin{\bf Step 4: verification of $\Gamma$.} We want to characterize $\Gamma$ induced by the $c$-conjugate triple $(f,g,h)$. Recall that $f(x)$ and $g(y)$ are finite only for $x = au, y = bv$ with $ a,b  \in [-\la,\la]$, so we assume this henceforth. The inequality
\[
f(x) + g(y) + h(z) \ge x \cdot y + y \cdot z + z \cdot x
\]
then becomes 
\be\label{ineq1}
h(z) \ge z\cdot (x+y) + x \cdot y.
\ee
Notice this inequality can only become equality for $(x,y)$ that satisfies:
\be\label{eq1}
h^*(x+y) = - x \cdot y.
\ee
That is, only for those $(x,y)$ satisfying \eqref{eq1} can there be $z$ satisfying equality in \eqref{ineq1}, i.e., $(x,y,z) \in \Gamma$. Thus we want to solve \eqref{eq1}. Recall \eqref{h*} that $h^* (z^*)$ is constant (and equal to $\la^2(t - \tfrac12)$) for $z^* = \pm t\la w + s\la(u +v)$, $t \in [0,1]$, $s \in [t-1, 1-t]$, where $w = v - u$. In view of \eqref{eq1}, let us assume  $x+y = t \la(v-u) + s \la(u+v)$. Notice this implies $x = (s-t)\la u$, $y = (s+t)\la v$, since $x,y$ are parallel to $u,v$ respectively. In this case, \eqref{eq1} yields
\begin{align*}
&\la^2(t - \tfrac12) = - \tfrac12 \la^2 (s^2 - t^2) \\
&\iff  s^2 = (t-1)^2 \\
& \iff s = t-1 \, \text{ or }\, 1-t.
\end{align*}
In view of \eqref{h*} and the definition of the region $D_\la$, this precisely means 
\be\label{x+y}
\{ x+y  \ | \ h^*(x+y) = - x \cdot y, \, x = au, \, y = bv, \, -\la \le a,b \le \la \} = \p D_\la
\ee
where $\p D_\la$ is the boundary of $D_\la$. Given $(x,y)$ such that $x+y \in \p D_\la$, finding $\Gamma_{x,y} := \{z \ | \ (x,y,z) \in \Gamma\}$ yielding equality in \eqref{ineq1} is now straightforward from the description of $h$ in \eqref{h}. For example, for $x= \la u, y = \la v$, it is clear that $\Gamma_{x,y}$ is precisely $R_3$, since the slope of $h$ is $x+y = \la (u+v)$ on $R_3$. Similarly, for $x = - \la u, y = \la v$, we find $\Gamma_{x,y} =R_2$. For $x = t\la u, y= \la v$ with $t \in (-1,1)$, $\Gamma_{x,y} = R_2 \cap R_3 = L_4$. In this way, we can describe $\Gamma$ completely as follows: 
\begin{align}\label{Gammadetail}
\Gamma_{x,y} = \begin{cases}
R_3 \q \q \ \ \text{ if } \ x= \la u, y = \la v,
\\
R_2 \cap R_3  \ \text{ if } \ x= t\la u, y = \la v \text{ and } t \in (-1,1),
\\
R_2  \q \q \ \ \text{ if } \ x= -\la u, y = \la v,
\\
R_2 \cap R_4 \ \text{ if } \ x= -\la u, y = t\la v \text{ and } t \in (-1,1),
\\
R_4 \q \q \ \ \text{ if } \ x= -\la u, y = -\la v,
\\
R_4 \cap R_1 \ \text{ if } \ x= t\la u, y = -\la v \text{ and } t \in (-1,1),
\\
R_1 \q \q \ \ \text{ if } \ x= \la u, y = -\la v,
\\
R_1 \cap R_3 \ \text{ if } \ x= \la u, y = t\la v \text{ and } t \in (-1,1).
\end{cases}
\end{align}
Gathering $x+y + \Gamma_{x,y}$ for every $x+y \in \p D$, we conclude that $S(\Gamma)$ is  precisely as described in \eqref{SGamma}. 
\\

\noin{\bf Step 5: $\Gamma$ is not maximally $c$-monotone.} Note that $(0,0,0) \in (\R^2)^3$ is not in $\Gamma$. We will show that $\tilde \Gamma := \Gamma \cup \{(0,0,0)\}$ is still $c$-monotone, thereby showing that $\Gamma$ is not maximally $c$-monotone. In view of Definition \ref{monotonicity definitions} and the fact that $\Gamma$ is $c$-monotone, it is enough to prove the following for any $(x,y,z) \in \Gamma$:
\begin{align}
c(x,y,z) + c(0,0,0) \ge c(0,y,z) + c(x,0,0), \nn \\
c(x,y,z) + c(0,0,0) \ge c(x,0,z) + c(0,y,0), \nn \\
c(x,y,z) + c(0,0,0) \ge c(x,y,0) + c(0,0,z). \nn 
\end{align}
That is, we need to show
\begin{align}\label{claim8}
z \cdot (x+y ) \ge 0, \ x \cdot (y+z) \ge 0, \ y \cdot (z+x) \ge 0  \ \text{ for any }  (x,y,z) \in \Gamma.
\end{align}
In each case of \eqref{Gammadetail}, we can directly check \eqref{claim8}. In the first case where $x = \la u$, $y = \la v$ and $z \in R_3$, the inequality $z \cdot (x+y) \ge 0$ is obvious from the direction of $u+v$ and the definition of $R_3$. By observing that $y+ R_3$ has nonnegative first components, the inequality $x \cdot (y+z) \ge 0$ is easily seen. Likewise, $y \cdot (z+x) \ge 0$ is checked. We can also easily check that other cases in \eqref{Gammadetail} where $\Gamma_{x,y} = R_2$, $\Gamma_{x,y} = R_4$, and $\Gamma_{x,y} = R_1$ satisfy \eqref{claim8} as well. The remaining cases in \eqref{Gammadetail} then satisfy \eqref{claim8} through interpolation, verifying that $\tilde \Gamma$ is $c$-monotone.
\end{proof}
\begin{remark}
The author conjectures that $\tilde \Gamma$ is also $c$-cyclically monotone, and thus $\Gamma$ is not maximally $c$-cyclically monotone either. However, verifying the $c$-cyclically monotonicity of $\tilde \Gamma$ appears to be nontrivial (despite the fact that we have a complete description of $\Gamma$!), and we leave it as an open question.
\end{remark}

The oblique direction of $u,v$ (i.e., $0 < u \cdot v < 1$), as well as finite-length but nonzero support of $f$ and $g$ (i.e., $ 0 < \la < \infty$), appear to be essential in constructing a counterexample presented in Example \ref{counter}. In the following examples, we illustrate these observations.

\begin{example}[Perpendicular $u,v$ yields $S(\Gamma) = \R^2$.]\label{ex1:perp}
Let  $u = e_1$, $v = e_2$ and $\la \in [0, \infty]$ in Example \ref{counter} (note that we allow $\la = 0$ or $\infty$), so that
\begin{align}\label{triple}
f(x) &= \begin{cases}
0 \, \text{ on }\,  I_1= \{ x = au \ | \ a \in [-\la,\la] \}, \\
\infty  \, \text{ else,} \nn
\end{cases}
\\
g(y) &= \begin{cases}
0 \, \text{ on }\,  I_2 = \{ y = bv \ | \ b \in [-\la,\la] \}, \\
\infty  \, \text{ else,} \nn
\end{cases}
\\
h(z) &= (f \oplus g)^c (z) \nn \\
&= \sup_{a,b \in [-\la,\la]} (au+bv) \cdot z \nn \\
&= \begin{cases}
\la (|z_1| + |z_2|) \, \text{ if } \, \la \in [0, \infty), \nn \\
0 \, \text{ if } \, z=0, \  \infty \text{ else, } \, \text{ if } \la = \infty.
\end{cases}
\end{align}
$(f,g,h)$ is a $c$-conjugate triple as easily verified. If $\la = 0$, then $(0,0,z) \in \Gamma$ for any $z \in \R^2$, yielding $S(\Gamma) = \R^2$. If $\la = \infty$, then $(au,bv,0) \in \Gamma$ for any $a,b \in \R$, again yielding $S(\Gamma) = \R^2$. Thus we henceforth assume $\la \in (0, \infty)$. We have
\be
h^*(z^*) = \begin{cases}
0 \, \text{ if } \, z^* = (z^*_1, z^*_2), \, -\la \le z^*_1, z^*_2 \le \la,\\
\infty \, \text{ else.}
\end{cases} \nn
\ee
Then the pair of $(x,y) \in I_1 \times I_2$ satisfying \eqref{eq1} is equal to $I_1 \times I_2$, since $x \cdot y = 0$. Thus the sum set, i.e., the set of $x+y$, becomes
\be
\{ x+y  \ | \ h^*(x+y) = - x \cdot y, \ x \in I_1, y \in I_2 \} =: D
\ee
where $D = I_1 + I_2$ is the convex hull of its four vertices $\pm \la(u+v), \pm \la(v-u)$. Notice the difference between \eqref{x+y} in Step 4 of Example \ref{counter}, where the sum set is equal to the boundary of $D$, and not all of $D$.

Now for $(x,y)$ with $x+y \in {\rm int} D$, clearly $\Gamma_{x,y} = \{z \ | \ (x,y,z) \in \Gamma\} = \{0\}$. For each $(x,y)$ such that $x+y \in \p D$, verifying $\Gamma_{x,y}$ is again straightforward from the definition of $h$. For example, it is easy to see that $\Gamma_{\la u, \la v} =\{ z = (z_1, z_2) \in \R^2 \ | \ z_1 \ge 0, z_2 \ge 0\}$, $\Gamma_{-\la u, \la v} =\{ z = (z_1, z_2) \in \R^2 \ | \ z_1 \le 0, z_2 \ge 0\}$, and $\Gamma_{t\la u, \la v} =\{ z = (z_1, z_2) \in \R^2 \ | \ z_1 = 0, z_2 \ge 0\}$ for any $t \in (-1,1)$, and so on. Gathering $x+y + \Gamma_{x,y}$ for every $x+y \in D$, we verify $S(\Gamma) = \R^2$. We thus conclude that the triple $(f,g,h)$ does not provide a counterexample to Question \ref{question1} when $u$ and $v$ are perpendicular.
\end{example}

\begin{example}[Infinite support of $f, g$ with $u \cdot v \neq 0$ yields $h \equiv +\infty$.]\label{ex2:infinite}
Let $u,v$ be non-perpendicular unit vectors in $\R^2$, i.e., assume $u \cdot v \in (0,1]$. Define
\begin{align*}
f(x) &= \begin{cases}
0 \, \text{ on }\,  I_1= \{ x = au \ | \ a \in \R \}, \\
\infty  \, \text{ else,} \nn
\end{cases}
\\
g(y) &= \begin{cases}
0 \, \text{ on }\,  I_2 = \{ y = bv \ | \ b \in \R \}, \\
\infty  \, \text{ else.} \nn
\end{cases}
\end{align*}
In this case, we claim that $h := (f \oplus g)^c$ is not proper but $h \equiv +\infty$. Because of this, the triple $(f,g,h)$ does not provide a counterexample to Question \ref{question1}.

The claim is straightforward from the following calculation:
\begin{align*}
(f \oplus g)^c (z) &= \sup_{x,y \in \R^2} x \cdot y + y \cdot z + z \cdot x - f(x) - g(y) \\
&= \sup_{a \in \R, b \in \R} (au + bv) \cdot z + ab (u \cdot v) \\
& \ge \sup_{a \in \R} a(u + v) \cdot z + a^2 (u \cdot v) \\
& = \infty
\end{align*}
for any $z$, since $u \cdot v > 0$.
\end{example}

\section{Dual perspective of cyclical conjugation and maximality}\label{sec5}
In this section, we illustrate cyclical involutivity and maximal monotonicity in the $N=3$ case in the context of duality. 
For $f,g,h \in {\cal A}(\cH)$, we recall the following inequality
\be\label{ineq4}
f(x) + g(y) + h(z) \ge \langle x,y \rangle +  \langle y,z \rangle +  \langle z,x \rangle, \ x,y,z \in \cH
\ee
is equivalent to
\be\label{ineq5}
F(x) + G(y) \ge H(x+y)
\ee
where $F = f +q$, $G = g + q$, $H = h^* + q$, which in turn yields the equivalence
\begin{align}
&f = (g \oplus h)^c \, \text{ and } \, g = (h \oplus f)^c \label{fgconjugate} \\
&\iff F(x) = \sup_{y \in \cH} H(x+y) - G(y) \, \text{ and } \, G(y) = \sup_{x \in \cH} H(x+y) - F(x) \nn \\
&\iff F = (H^*-G^*)^* \, \text{ and } \, G = (H^*-F^*)^* \ \text{ by Lemma \ref{switch}} \nn \\
&\iff U = (W - V)^{**}  \, \text{ and } \, V = (W - U)^{**} \label{dualproblem}
\end{align}
where $U = F^*$, $V = G^*$, $W = H^*$. This naturally leads us to  consider the following class of convex functions (see \cite[Propositions 12.30, 14.2]{BC2017})
\begin{align}
{\cal A}_1(\cH) := &\, \{F \in {\cal A}(\cH) \ | \ F \text{ is $1$-strongly convex}\}, \nn \\
 {\cal A}_{1}^*(\cH) := &\, \{U \in {\cal A}(\cH) \ | \ U = F^* \text{ for some } F \in {\cal A}_1(\cH) \} \nn \\
 = &\, \{U \in {\cal A}(\cH) \ | \ U = f \square q \text{ for some } f \in {\cal A}(\cH) \} \nn \\
 =&\, \{U \in {\cal A}(\cH) \ | \ q - U  \text{ is continuous and convex} \}. \nn
\end{align}
We see that, given \eqref{fgconjugate}, the question of $3$-cyclical involutivity, that is, whether $h$ is equal to $(f \oplus g)^c$ or not, can be recast as the following dual problem: 
\vspace{1mm}

\noin Is $W := h \square q$ the smallest (pointwise in $\cH$) function in ${\cal A}_{1}^*(\cH)$ satisfying \eqref{dualproblem}?
\\

Note that $W := U+V$ is obviously the smallest convex function satisfying \eqref{dualproblem}, but $U+V$ may not belong to the class ${\cal A}_{1}^*(\cH)$ in general. This makes the question of finding the smallest $W$ in ${\cal A}_{1}^*(\cH)$ nontrivial. Indeed, we show below how the cyclical involution can fail only under the assumption \eqref{fgconjugate}.
\\

Now we turn to the maximality question \eqref{maximality} in the context of duality. Assume $f,g,h \in {\cal A}(\cH)$ satisfies \eqref{ineq4}. The equivalence between \eqref{t:split_max_c-mono-v} and \eqref{t:split_max_c-mono-vii} in Theorem \ref{t:split_max_c-mono} is then recast, via Moreau decomposition, as follows:
\be\label{equiv1}
\Gamma = \Gamma_{\{f,g,h\}} \text{ satisfies \eqref{maximality} if and only if } W = U+V.
\ee
Now assume $f,g \in {\cal A}(\cH)$, and set $h := (f \oplus g)^c$. Observe that \eqref{equiv1} then yields the following equivalence:
\be\label{equiv2}
\Gamma = \Gamma_{\{f,g,h\}} \text{ satisfies \eqref{maximality} if and only if } U+V \in {\cal A}_{1}^*(\cH).
\ee
This is because if $W:= U+V$ is in ${\cal A}_{1}^*(\cH)$, then $W^* - q$ is convex, thus the corresponding triple $f := U^* - q$, $g := V^* - q$ and $h := (W^* - q)^*$ satisfies \eqref{maximality} and $c$-conjugate, by the equivalence between \eqref{t:split_max_c-mono-v} and \eqref{t:split_max_c-mono-vii} in Theorem \ref{t:split_max_c-mono}.

Let us explore the above discussion through the examples from the previous section. In Example \ref{counter}, the triple $(f,g,h)$ is $c$-conjugate, which yields $W=h \square q$ is indeed the smallest element in ${\cal A}_{1}^*(\cH)$ satisfying \eqref{dualproblem}. However, $W(0) = -\min W^*> 0 = U(0)+V(0)$, causing the equation $W = U+V$ to fail. We deduce $\Gamma$ does not satisfy \eqref{maximality}, without having to precisely compute $S(\Gamma)$.

On the other hand, in Example \ref{ex1:perp}, we have  
\begin{align*}
U(x) =(f+q)^*(x_1,x_2) = \begin{cases}
\frac12 |x_1|^2 \, \text{ for }\, x_1 \in [-\la,\la], \\
\la x_1 - \frac 12 \la^2 \, \text{ for }\, x_1 \ge \la, \\
-\la x_1 - \frac 12 \la^2 \, \text{ for }\, x_1 \le -\la,
\end{cases}
\end{align*}
and $V(x_1,x_2) = U(x_2,x_1)$. It is easy to see that $U+V$ belongs to ${\cal A}_{1}^*(\cH)$, and hence by \eqref{equiv2}, we conclude that the triple $(f,g,h)$ does satisfy \eqref{maximality}.

Finally, we conclude this paper by detailing Proposition \ref{noinvolutive}.

\begin{example}\label{noinvolutive1}
We will construct $f,g,h \in {\cal A}(\R^2)$ such that $f = (g \oplus h)^c$, $g = (h \oplus f)^c$, but $h \neq (f \oplus g)^c$. We denote $x = (x_1,x_2) \in \R^2$, $y=(y_1,y_2) \in \R^2$. Let $a \lor b := \max \{a,b\}$. Recall $\{e_1, e_2\}$ denote the standard basis of $\R^2$. Define
\begin{align}
H_1(x) &=|x_1| \lor |x_2| + q(x), \nn \\
G_0(y) &= \begin{cases}
H_1(y) \ \text{ if } \ y = e_1 \text{ or }\, y=-e_1,\\
+\infty \ \text{ else,}
\end{cases} \nn
\\
F_1(x) &= \sup_{y \in \R^2} H_1(x+y) - G_0 (y), \nn \\
G_1(y) &= \sup_{x \in \R^2} H_1(x+y) - F_1 (x). \nn
\end{align}
Then $F_1, G_1$ are $H_1$-conjugate. And by definition of $F_1$, we have 
\be
F_1(0) = \min F_1 = 0, \text{ and } \  \frac{\p F_1}{\p x_2} (0) = 0, \nn
\ee
that is, $F_1$ is smooth at its minimum (origin) in the $x_2$-direction. Next, notice $G_1 = G_0$ at $y = \pm e_1$, and by symmetry of the construction, we have $G_1 (0) = \min G_1$. We claim $G_1(0) > 0$. This can be easily seen from the inequality 
\be
F_1 (x) \ge H_1(x) - G_1(0) \ \text{ for all } \ x \in \R^2, \nn
\ee
and the fact that $F_1(0) = H_1(0) = 0$, and at the origin $x=0$, $F_1$ is smooth in $x_2$-direction but $H_1$ is singular. This proves the claim. Let $m := G_1(0) > 0$, and redefine
\be
H = H_1 - m, \q F = F_1, \q G = G_1 - m, \nn
\ee
so that we have 
\begin{align}
&F(0) = \min F =  G(0) = \min G = 0, \ H(0) = \min H  < 0, \text{ and } \label{zeromin}\\
& F,G \text{ are $H$-conjugate.} \label{conjugateok}
\end{align}
Now we define $f,g,h \in {\cal A}(\R^2)$ by 
$f = F - q$, $g = G - q$, and $h = (H - q)^*$. Then as discussed before, \eqref{conjugateok} implies that $f = (g \oplus h)^c$ and $g = (h \oplus f)^c$. We need to show $h \neq (f \oplus g)^c$. To show this, rather than finding precise formulas for $h$ and $(f \oplus g)^c$, we derive a contradiction. Recall that the equality $h = (f \oplus g)^c$ implies $ h^* + q$ is the largest $1$-strongly convex lower-semicontinuous function among all $H \in {\cal A}_1(\R^2)$ satisfying \eqref{ineq5}. We claim that this is false with $h = (H - q)^*$. 

Set $\tilde H := H \lor 0$. Notice that \eqref{zeromin} then readily implies $F,G$ are $\tilde H$-conjugate as well. But $\tilde H$ is not $1$-strongly convex yet. Note that for any $K \in {\cal A}(\R^2)$ satisfying $H \le K \le \tilde H$, we have that $F,G$ are $K$-conjugate. Hence the claim will follow if we can find a $1$-strongly convex function $K$ satisfying:
\be\label{lastclaim}
H \le K \le \tilde H, \text{ and }\, H(0) < K(0).
\ee
But the definitions $H(x) = |x_1| \lor |x_2| + q(x) - m$ and $ \tilde H = H \lor 0$ readily imply that for all sufficiently small $\ep > 0 $, the following function
\be
K_\ep  := H \lor ( q - m + \ep) \nn
\ee
is a $1$-strongly convex function satisfying \eqref{lastclaim}. This yields $h \neq (f \oplus g)^c$.
\end{example}



\bibliographystyle{siamplain}
\bibliography{references}

\end{document}